\theoremstyle{plain}
    \newtheorem{theorem}{Theorem}[section]
    \newtheorem{lemma}[theorem]{Lemma}
    \newtheorem{corollary}[theorem]{Corollary}
    \newtheorem{proposition}[theorem]{Proposition}
 \theoremstyle{definition}
    \newtheorem{definition}[theorem]{Definition}
    \newtheorem{remark}[theorem]{Remark}
\theoremstyle{remark}
\numberwithin{equation}{section}
 \DeclareMathOperator{\tr}{tr}
\DeclareMathOperator{\Ad}{Ad}
\DeclareMathOperator{\ind}{index}
\DeclareMathOperator{\End}{End}
\DeclareMathOperator{\ch}{ch}
\DeclareMathOperator{\Todd}{Todd}
\DeclareMathOperator{\reg}{reg}
\DeclareMathOperator{\rank}{rank}
\DeclareMathOperator{\Spin}{Spin}
\DeclareMathOperator{\SO}{SO}
\DeclareMathOperator{\SL}{SL}
 \DeclareMathOperator{\Ind}{Ind}
 \DeclareMathOperator{\ev}{ev}
         \DeclareMathOperator{\supp}{supp}
  \DeclareMathOperator{\DInd}{D-Ind}
\DeclareMathOperator{\ds}{ds}
\begin{document}


\newcommand{\myemph}{\emph}

\newcommand{\Spinc}{\Spin^c}

    \newcommand{\R}{\mathbb{R}}
    \newcommand{\C}{\mathbb{C}} 
    \newcommand{\N}{\mathbb{N}}
    \newcommand{\Z}{\mathbb{Z}} 
    \newcommand{\Q}{\mathbb{Q}}
    \newcommand{\bT}{\mathbb{T}}
    \newcommand{\bP}{\mathbb{P}}

\newcommand{\g}{\mathfrak{g}}
\newcommand{\h}{\mathfrak{h}}
\newcommand{\p}{\mathfrak{p}}
\newcommand{\kg}{\mathfrak{g}} 
\newcommand{\kt}{\mathfrak{t}}
\newcommand{\ka}{\mathfrak{a}}
\newcommand{\XX}{\mathfrak{X}}
\newcommand{\kh}{\mathfrak{h}} 
\newcommand{\kp}{\mathfrak{p}}
\newcommand{\kk}{\mathfrak{k}}
\newcommand{\km}{\mathfrak{m}}
\newcommand{\ksl}{\mathfrak{sl}}

\newcommand{\cA}{\mathcal{A}}
\newcommand{\cE}{\mathcal{E}}
\newcommand{\calL}{\mathcal{L}}
\newcommand{\calH}{\mathcal{H}}
\newcommand{\cO}{\mathcal{O}}
\newcommand{\cB}{\mathcal{B}}
\newcommand{\cK}{\mathcal{K}}
\newcommand{\cP}{\mathcal{P}}
\newcommand{\cN}{\mathcal{N}}
\newcommand{\calD}{\mathcal{D}}
\newcommand{\cC}{\mathcal{C}}
\newcommand{\calS}{\mathcal{S}}

\newcommand{\cCM}{\cC}
\newcommand{\PM}{P}
\newcommand{\DM}{D}
\newcommand{\LM}{L}
\newcommand{\vM}{v}

\newcommand{\ddt}{\left. \frac{d}{dt}\right|_{t=0}}

\newcommand{\Bigwedge}{\textstyle{\bigwedge}}

\newcommand{\ii}{\sqrt{-1}}

\newcommand{\Ubar}{\overline{U}}

\newcommand{\tilK}{\widetilde{K}}
\newcommand{\tilT}{\widetilde{T}}

\newcommand{\beq}[1]{\begin{equation} \label{#1}}
\newcommand{\eeq}{\end{equation}}

\newcommand{\Todo}{\textbf{To do}}

\newcommand{\mattwo}[4]{
\left( \begin{array}{cc}
#1 & #2 \\ #3 & #4
\end{array}
\right)
}

\newenvironment{proofof}[1]
{\noindent \emph{Proof of #1.}}{\hfill $\square$}

\title{Orbital integrals and $K$-theory classes}

\author{Peter Hochs\footnote{University of Adelaide, \texttt{peter.hochs@adelaide.edu.au}} {} and Hang Wang\footnote{East China Normal University, \texttt{wanghang@math.ecnu.edu.cn}; University of Adelaide, \texttt{hang.wang01@adelaide.edu.au}}}

\date{\today}

\maketitle

\begin{abstract}
%
Let $G$ be a semisimple Lie group with discrete series. We use maps  $K_0(C^*_rG)\to \C$ defined by orbital integrals to recover group theoretic information about $G$, including information contained in $K$-theory classes not associated to the discrete series. An important tool is a fixed point formula for equivariant indices obtained by the authors in an earlier paper.
Applications include a tool to distinguish classes in $K_0(C^*_rG)$, the (known) injectivity of Dirac induction, versions of Selberg's principle in $K$-theory and for matrix coefficients of the discrete series, a Tannaka-type duality, and a way to extract characters of representations from $K$-theory. Finally, we obtain a continuity property near the identity element of $G$ of families of maps $K_0(C^*_rG)\to \C$, parametrised by semisimple elements of $G$, defined by stable orbital integrals. This  implies a continuity property for $L$-packets of discrete series characters, which in turn can be used to deduce a (well-known) expression for formal degrees of discrete series representations from Harish-Chandra's character formula.
\end{abstract}

\tableofcontents

\section{Introduction}

Let $G$ be a real semismple Lie group. Its reduced $C^*$-algebra $C^*_rG$ is the closure in $\cB(L^2(G))$ of the algebra of convolution operators by functions in $L^1(G)$. It represents the tempered dual of $G$ as a `noncommutative space' in the sense of noncommutative geometry, and
encodes all tempered representations of $G$. Its $K$-theory $K_*(C^*_rG)$ is a natural invariant to consider. This $K$-theory is described explicitly in terms of equivariant indices of Dirac operators on $G/K$, for a maximal compact subgroup $K<G$, in the Connes--Kasparov conjecture. This was proved in various cases by Penington and Plymen \cite{Penington83}, Wassermann \cite{Wassermann87}, Lafforgue \cite{Lafforgue02b} and finally in general by Chabert, Echterhoff and Nest \cite{CEN}.

Despite this explicit knowledge about the structure of $K_*(C^*_rG)$, it remains a challenge to extract explicit representation theoretic information from this $K$-theory group. There has been a good amount of success in this direction for classes in $K_*(C^*_rG)$ corresponding to discrete series representations, for groups having such representations.
For example, Lafforgue \cite{Lafforgue02} used $K$-theory to recover Harish-Chandra's criterion $\rank(G) = \rank(K)$ for the existence of discrete series representations.

The von Neumann trace $\tau_e$ on $C^*_rG$, defined by $\tau_e(f) = f(e)$ for $f$ in a dense subalgebra, induces a map on $K_0(C^*_rG)$. On classes corresponding to the discrete series, this gives the formal degrees of such representations. But this trace maps all other classes to zero (see Proposition~7.3 in \cite{Connes82}). It has recently become clear that a natural generalisation of the von Neumann trace involving \emph{orbital integrals} can be used to extract much more information from $K_0(C^*_rG)$. For a semisimple element $g \in G$, the orbital integral $\tau_g(f)$ of a function $f$ on $G$ is the integral of $f$ over the conjugacy class of $g$. This integral converges for $f$ in  Harish-Chandra's Schwartz algebra, which has the same $K$-theory as $C^*_rG$. That leads to maps 
\beq{eq taug}
\tau_g\colon K_0(C^*_rG) \to \C.
\eeq

If $D$ is an elliptic operator on a $\Z_2$-graded vector bundle over a manifold $M$, $G$-equivariant for a proper, cocompact action by $G$ on $M$, then one has the equivariant index
\[
\ind_G(D) \in K_0(C^*_rG).
\]
In \cite{HW2}, the authors proved a fixed point formula for the numbers
\beq{eq taug ind}
\tau_g(\ind_G(D)).
\eeq
They showed that Harish-Chandra's character formula for the discrete series is a special case of this fixed point formula, much as Weyl's character formula is a special case of the Atiyah--Segal--Singer \cite{Atiyah68} or Atiyah--Bott \cite{ABI} fixed point formulas, as proved in \cite{ABII}. Also, Shelstad's character identities for $L$-packets of representations follows from a $K$-theoretic argument involving $\tau_g$, in the case of discrete series representations \cite{HW3}.

For discrete groups, orbital integrals (now sums over conjugacy classes) are also useful tools in $K$-theory. The main result in \cite{Wangwang} is a fixed point theorem for \eqref{eq taug ind} in the discrete group case, which has consequences to orbifold geometry, positive scalar curvature metrics, and trace formulas. Gong \cite{Gong} and Samurka\c{s} \cite{Samurkas} used such maps on the $K$-theory of maximal group $C^*$-algebras to deduce information about rigidity of manifolds. And Xie and Yu have an article in preparation about an APS-type index theorem involving $\tau_g$.

For semisimple Lie groups $G$, the results in \cite{HW2, HW3, Lafforgue02b}  mentioned above show that classes in $K_0(C^*_rG)$ corresponding to the discrete series contain a great deal of information about those representations. But it was long unclear what (representation theoretic) information can be recovered from other classes. That question was important motivation for this paper. As a concrete  example, it was not known what information the generator of $K_0(C^*_r\SL(2,\R))$ corresponding to the limits of discrete series (or to the non-spherical principal series) contains.

In the present paper, we investigate further properties and applications of the maps \eqref{eq taug} for semisimple Lie groups, many of them related to the fixed point formula for \eqref{eq taug ind}. This starts with an explicit expression for $\tau_g$ applied to $K$-theory generators defined via Dirac induction (Theorem \ref{thm fixed GK}). That result shows that $\tau_g$ is the zero map on $K$-theory if $\rank(G)\not=\rank(K)$, but it has interesting consequences if $\rank(G)=\rank(K)$.
These
 include
\begin{itemize}
\item a way to use the maps $\tau_g$ to distinguish elements of $K_0(C^*_rG)$ (Corollary \ref{cor sep pts});
\item an embedding of $K_0(C^*_rG)$ into the spaces of distributions on $G^{\reg}$ or $G$ (Corollary \ref{cor distr});
\item an induction formula from $K$-equivariant indices to $G$-equivariant ones (Corollary \ref{cor induction});
\item versions of Selberg's vanishing principle for classes in $K_0(C^*_rG)$ (Corollary \ref{cor selberg}) and matrix coefficients of the discrete series (Corollary \ref{cor selberg ds});
\item  a Tannaka-type duality result (Corollary \ref{cor reconstruct});
\item a result relating the value of $\tau_g$ on $K$-theory generators to characters of representations (Corollary \ref{cor char}).
\end{itemize}
Furthermore, Dirac induction is known to be injective (indeed, bijective), but we recover this injectivity independently as well.

In the last bullet point above, Corollary \ref{cor char} explicitly states that $\tau_g$ maps a $K$-theory class to the value at $g$ of the character of one of the irreducible direct summands of the representation it corresponds to naturally. The values at $g$ of these characters are equal up to a sign, and they add up to zero if that representation is reducible. So the value at $g$ of one of these characters is the most relevant information one could have expected to obtain by applying $\tau_g$. 
This, to a large extent, answers the question if and what representation theoretic information is contained in classes in $K_0(C^*_rG)$ if $\rank(G)=\rank(K)$, even those not corresponding to the discrete series.
 In particular, the generator of $K_0(C^*_r\SL(2,\R))$ corresponding to the limits of discrete series
determines the characters of these representations on $K$.

For a fixed element $x \in K_0(C^*_rG)$, we will see that $\tau_g(x)$ does not depend continuously on $g$, for example at the identity element $e$. Theorem \ref{thm cts} states that a modified version of $\tau_g$, related to $L$-packets of representations in the Langlands program, has better continuity properties at $e$. That implies continuity of certain finite sums of discrete series characters (Corollary \ref{cor char cts}). And that can be used to take the limit as $g \to e$ in Harish-Chandra's character formula for the discrete series to obtain expressions for formal degrees of discrete series representations.


We hope that the various applications of orbital integrals to $K$-theory of group $C^*$-algebras in this paper
 help to demonstrate the relevance of orbital integrals as a tool to study such $K$-theory groups. In future work, we hope to generalise the results and their applications in this paper to more general groups.
  
\subsection*{Acknowledgements}

The authors thank Yanli Song for useful discussions.
The second author was supported by the Australian Research Council, through Discovery Early Career
Researcher Award DE160100525.

\section{Preliminaries}

Throughout this paper, let $G$ be a connected semisimple Lie group with finite centre. Let $K<G$ be a maximal compact subgroup. 
For any Lie group, we will denote its Lie algebra by the corresponding gothic letter.
Fix a $K$-invariant inner product on $\kg$, and let $\kp \subset \kg$ be the orthogonal complement to $\kk$. Then  $\kg = \kk \oplus \kp$. 

\subsection{Dirac induction}

The map $\Ad\colon K \to \SO(\kp)$ lifts to $\widetilde{\Ad}\colon \tilK \to \Spin(\kp)$, for a double cover $\widetilde {K}$  of $K$. 
Let $\Delta_{\kp}$ be the standard representation of $\Spin(\kp)$, viewed as a representation of $\tilK$ via $\widetilde{\Ad}$. 
Let $\hat K_{\Spin}$ be the set of 
 irreducible representations $V$ of $\tilK$ such that $\Delta_{\kp} \otimes V$ descends to a representation of $K$. Let $R_{\Spin}(K)$ be the free abelian group generated by $\hat K_{\Spin}$. 
 
 Let $V \in \hat K_{\Spin}$. Then we have the $G$-equivariant vector bundle
\[
E_V := G\times_K(\Delta_{\kp} \otimes V) \to G/K.
\]
Let $\{X_1, \ldots, X_{\dim(G/K)}\}$ be an orthonormal basis of $\kp$. Let $c_{\kp}\colon \kp \to \End(\Delta_{\kp})$ be the Clifford action. Let $L\colon \kg \to \End(C^{\infty}(G))$ be the infinitesimal left regular representation. Consider the Dirac operator
\[
D_V := \sum_{j=1}^{\dim(G/K)} L(X_j) \otimes c_{\kp}(X_j) \otimes 1_V
\]
on
\[
\Gamma^{\infty}(E_V) = \bigl( C^{\infty}(G) \otimes \Delta_{\kp} \otimes V\bigr)^K.
\]

If $G/K$ has a $G$-invariant $\Spin$-structure (which is the case precisely if $\Delta_{\kp}$ descends to $K$), then $D_V$ is the $\Spin$-Dirac operator on $G/K$ coupled to the bundle $G \times_K V \to G/K$, see Proposition 1.1 in \cite{Parthasarathy72}.
In any case, $D_V$ is a $G$-equivariant elliptic differential operator, and has an index
\[
\ind_G(D_V) \in K_*(C^*_rG).
\]
Here $C^*_rG$ is the reduced group $C^*$-algebra of $G$, and $\ind_G$ is the analytic assembly map \cite{Connes94}.  If $\dim(G/K)$ is even, then $\Delta_{\kp}$, and hence $E_V$, has a natural $\Z_2$-grading with respect to which $D_V$ is odd. Then $\ind_G(D_V) \in K_0(C^*_rG)$. If $\dim(G/K)$ is odd, then there is no such grading, and $\ind_G(D_V) \in K_1(C^*_rG)$. So in general, we have
\[
\ind_G(D_V) \in K_{\dim(G/K)}(C^*_rG).
\]
\emph{Dirac induction} is the map
\[
\DInd_K^G\colon R_{\Spin}(K) \to K_{\dim(G/K)}(C^*_rG)
\]
given by 
\[
\DInd_K^G[V] = \ind_G(D_V),
\]
with $V$ as above. By the Connes--Kasparov conjecture, proved in \cite{CEN, Lafforgue02b, Wassermann87}, this map is an isomorphism of abelian groups.

From now on, we suppose that $G/K$ is even-dimensional, since the $K$-theory group $K_0(C^*_rG)$ we study is zero otherwise.

\subsection{Orbital integrals and a fixed point formula}

Let $g \in G$ be a semisimple element. Let $Z_G(g)<G$ be its centraliser. Let $d(hZ_G(g))$ be the left invariant measure on $G/Z_G(g)$ determined by a Haar measure $dg$ on $G$. The \emph{orbital integral} with respect to $g$ of a measurable function $f$ on $G$ is
\[
\tau_g(f) := \int_{G/Z_G(g)} f(hgh^{-1})\, d(hZ_G(g)),
\]
if the integral converges. Harish-Chandra proved that the integral converges for $f$ in the Harish-Chandra Schwartz algebra $\cC(G)$, see Theorem 6 in \cite{HC66}. The subalgebra $\cC(G) \subset C^*_rG$ is dense and closed under holomorphic functional calculus (see Theorem 2.3 in \cite{HW2}). Hence we obtain a map
\[
\tau_g\colon K_0(C^*_rG) = K_0(\cC(G)) \to \C.
\]
Note that $\tau_e$ is the usual von Neumann trace.


Let $M$ be a Riemannian manifold with a proper, isometric, cocompact action by $G$. Let $E \to M$ be a $G$-equivariant, Hermitian, $\Z_2$-graded vector bundle. Let $D$ be an odd, self-adjoint, $G$-equivariant, elliptic differential operator on $E$. Then we have
\[
\ind_G(D) \in K_0(C^*_rG).
\]
In \cite{HW2}, the authors proved a fixed-point formula for the number $\tau_g(\ind_G(D))$, for almost all $g \in G$. Consequences include Harish-Chandra's character formula for the discrete series (Theorem 16 in \cite{HC66}; see Corollary 2.6 in \cite{HW2}) and Shelstad's character identities in the case of discrete series representations (\cite{Shelstad79}; see Theorem 2.5 in \cite{HW3}). In this paper, we explore further consequences.


To state the fixed point formula in \cite{HW2}, let
$\cN \to M^g$ be the normal bundle to the fixed point set $M^g$ of $g$ in $M$. Let $\sigma_D$ be the principal symbol of $D$. Let $c^g \in C_c(M^g)$ be nonnegative, and  such that for all $m \in M^g$,
\[
\int_{Z_G(g)} c^g(hm)\, dh = 1,
\]
for a fixed Haar measure $dh$ on $Z_G(g)$ compatible with $dg$ and $d(hZ_G(g))$. If $G/K$ is odd-dimensional, then $K_0(C^*_rG)=0$, so $\tau_g(\ind_G(D))=0$.
\begin{theorem}\label{thm fixed pt}
If $G/K$ is even-dimensional, then for almost all semisimple $g \in G$, we have $\tau_g(\ind_G(D))=0$ if $g$ is not contained in any compact subgroup of $G$, and 
\beq{eq fixed pt}
\tau_g(\ind_G(D)) = \int_{TM^g}c^g \frac{\ch\bigl([\sigma_D|_{\supp(c^g)}](g)\bigr)\Todd(TM^g \otimes \C)}{\ch\bigl([\Bigwedge \cN \otimes \C] (g) \bigr)}
\eeq
if it is.
\end{theorem}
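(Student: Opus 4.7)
The plan is to represent $\ind_G(D) \in K_0(C^*_rG) = K_0(\cC(G))$ by a concrete element of (matrices over) the Harish--Chandra Schwartz algebra, then apply $\tau_g$ directly and use heat kernel asymptotics to localise on $M^g$. A convenient representative comes from a heat-kernel/parametrix construction: using a cocompactness cutoff $c \in C_c(M)$ with $\int_G c(hm)^2\, dh = 1$, one builds an idempotent $p_t \in M_2(\cC(G))$ from $e^{-tD^2}$ (the Connes--Moscovici / Wu construction in the proper cocompact setting) whose $K_0$ class is $\ind_G(D)$ and is independent of $t>0$. Applying $\tau_g$ to $p_t$ and unfolding the orbital integral against the Schwartz kernel of $e^{-tD^2}$ gives, after an integration by parts using the cutoff, an expression of the form
\[
\tau_g(\ind_G(D)) = \int_{M^g} c^g(m)\, \Str\bigl(k_t(g^{-1}m,m)\bigr)\, dm,
\]
where $k_t$ is the heat kernel on $E$ and $c^g$ is the induced cutoff on $M^g$ (the condition $\int_{Z_G(g)} c^g(hm)\,dh=1$ being exactly what one obtains from $c$ by integrating over a compact transversal to $Z_G(g)$-orbits).

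The right-hand side is $t$-independent, so one may take $t \downarrow 0$. On a neighbourhood of $M^g$ the standard Getzler rescaling / Atiyah--Bott--Segal--Singer asymptotic expansion of $\Str(k_t(g^{-1}m,m))$ applies verbatim, because $g$ acts by isometries and the local geometry near $M^g$ is that of a twisted Dirac-type operator on a compact transversal. The resulting limit integrand is the classical equivariant index density
\[
c^g\, \frac{\ch\bigl([\sigma_D|_{\supp(c^g)}](g)\bigr)\Todd(TM^g\otimes\C)}{\ch\bigl([\Bigwedge \cN \otimes \C](g)\bigr)},
\]
yielding the stated formula. The restriction to \emph{almost all} semisimple $g$ covers the need to exclude a measure-zero exceptional set where the various asymptotic expansions or the convergence of $\tau_g$ on a dense subalgebra fails to be uniform; on the regular set (for which $\tau_g$ is continuous on $\cC(G)$ by Harish-Chandra) one has the required control.

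For the vanishing statement, suppose $g$ is semisimple but lies in no compact subgroup. Then $g$ has a nontrivial hyperbolic part in its Jordan decomposition, so any fixed point $m \in M^g$ would have isotropy group $G_m$ containing $g$. Since $G$ acts properly, $G_m$ is compact, which forces $g$ to lie in a compact subgroup, a contradiction. Hence $M^g = \emptyset$, and the same localisation argument above shows $\tau_g(\ind_G(D))=0$: as $t \downarrow 0$, the heat kernel $k_t(g^{-1}m,m)$ decays faster than any polynomial uniformly on $\supp(c)$ because $d(g^{-1}m,m)$ is bounded below by a positive constant on this compact set, while the integral is $t$-independent.

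The main obstacle, in my view, is not the formal computation but the justification of the interchange of $\tau_g$ with the heat semigroup and with the $t\to 0$ limit. One must show that $p_t$ lies in $M_2(\cC(G))$ with Schwartz seminorms controlled uniformly for $t$ in compact subsets of $(0,\infty)$, and that $\tau_g$ extends continuously to the relevant space. This relies on Harish-Chandra's estimates for orbital integrals of Schwartz functions and on off-diagonal Gaussian bounds for the heat kernel of $D^2$ on a proper cocompact $G$-manifold; the regular semisimple condition on $g$ is used precisely here, and is the source of the ``almost all'' clause.
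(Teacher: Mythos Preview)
This theorem is not proved in the present paper: it is stated in the preliminaries and cited from the authors' earlier work \cite{HW2}. So there is no ``paper's own proof'' to compare against here. Your sketch is, however, a faithful outline of the standard heat-kernel/localisation argument that underlies such fixed point formulas, and is presumably close to what is carried out in \cite{HW2}.

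Two comments on the sketch itself. First, the intermediate identity you write,
\[
\tau_g(\ind_G(D)) = \int_{M^g} c^g(m)\, \Str\bigl(k_t(g^{-1}m,m)\bigr)\, dm,
\]
is not quite the right object before letting $t\downarrow 0$: unfolding the orbital integral against the heat kernel produces an integral over $M$ (equivalently, over a $Z_G(g)$-fundamental domain in $M$, implemented by a cutoff for the $Z_G(g)$-action), not over $M^g$. The passage to an integral supported on $M^g$ is precisely the content of the $t\to 0$ localisation. As written you have collapsed two distinct steps into one, which makes the subsequent sentence about Getzler rescaling slightly circular.

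Second, the paper makes the ``almost all'' clause precise: see Remark~\ref{rem FGOI}. The relevant condition is that $g$ have \emph{finite Gaussian orbital integral}, i.e.\ that $\int_{G/Z_G(g)} e^{-d(e,hgh^{-1})^2}\,d(hZ_G(g))$ converge. This is exactly the hypothesis needed to control the orbital integral of the heat-kernel idempotent uniformly in $t$, and it holds on a set of full measure. Your description of the exceptional set is in the right spirit but vaguer than what is actually used.

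Your vanishing argument for $g$ not contained in a compact subgroup is correct: properness forces stabilisers to be compact, so $M^g=\varnothing$, and off-diagonal Gaussian decay of $k_t$ then gives the conclusion.
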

Here $\ch\colon K^0(\supp(c^g)) \to H^*(\supp(c^g))$ and $\ch\colon K^0(TM^g|_{\supp(c^g)}) \to H^*(TM^g|_{\supp(c^g))}$ are Chern characters, and $\Todd$ denotes the Todd class. 

\begin{remark}\label{rem FGOI}
Explicitly, Theorem \ref{thm fixed pt} holds for the  semisimple $g \in G$ with \emph{finite Gaussian orbital integral} (FGOI), see  Definition 7 in  \cite{HW2}. That condition means that the integral
\[
\int_{G/Z_G(g)} e^{-d(e,hgh^{-1})^2}\, d(hZ_G(g))
\]
converges, where $d$ is the $G$-invariant Riemannian distance on $G$. It was shown in Proposition 4.2 in \cite{HW2} that almost every element of $G$ has FGOI.

In this paper, whenever a result is stated for almost all $g$, what is meant is that it holds for semisimple elements with FGOI, and possibly also with dense powers in a maximal torus.
\end{remark}


\section{A fixed point formula on $G/K$}

Let $T<K$ be a maximal torus. Let $\tilT <\tilK$ be its inverse image in $\tilK$.
Fix a set $R^+_c$ of positive roots of $(\kk^{\C},\kt^{\C})$. Let $\rho_c$ be half the sum of the elements of $R_c^+$.
Let $V \in \hat K_{\Spin}$. 
Let $\lambda \in i\kt^*$ be its highest weight with respect to $R_c^+$. 

For any finite-dimensional (actual or virtual) representation $W$ of $K$ or $\tilK$, we denote its character by $\chi_W$. For any function $\varphi$ on $\widetilde K$ that descends to a function on $K$, we will use the same notation $\varphi$ for both the function on $\tilK$ and $K$. E.g., we have $\chi_{\Delta_{\kp}}\chi_V \in C^{\infty}(K)$.

In the case where $T$ is a Cartan subgroup of $G$, i.e.\ $\rank(G)=\rank(K)$, fix a set of positive noncompact roots $R_n^+$ of $(\kg^{\C}, \kt^{\C})$ such that the character $\chi_{\Delta_{\kp}}$ of the graded representation $\Delta_{\kp}$ of $\tilde K$ satisfies
\beq{eq char Delta p}
\chi_{\Delta_{\kp}}|_{\tilde T} = \prod_{\alpha \in R_n^+} (e^{\alpha/2}-e^{-\alpha/2}).
\eeq
Such a choice of positive noncompact roots can always be made, see for example pages 17 and 18 of \cite{Atiyah77},  Remark 2.2 in \cite{Parthasarathy72} and (5.1) in \cite{ASIII}. In the equal-rank case, we write $R^+ := R_c^+ \cup R_n^+$. We will denote half the sums of the elements of $R^+$ and $R_n^+$ by $\rho$ and $\rho_n$, respectively.

Let $W_K := N_K(T)/T$ be the Weyl group of $(K,T)$.

\begin{theorem} \label{thm fixed GK}
\begin{itemize}
\item[(a)]
If $\rank(G) = \rank(K)$, then 
for almost all $g \in T$,
\[
\begin{split}
\tau_g(\DInd_K^G[V]) 
&= (-1)^{\dim(G/K)/2} \frac{\chi_V}{\chi_{\Delta_{\kp}}}(g)\\
&=  (-1)^{\dim(G/K)/2}\frac{\sum_{w \in W_K} \varepsilon(w)e^{w(\lambda+ \rho_c)}}{\prod_{\alpha \in R^+} (e^{\alpha/2}-e^{-\alpha/2})}(g).
\end{split}
\]
(In particular, the right hand sides are well-defined.)
\item[(b)]
If $\rank(G) \not= \rank(K)$, then 
for almost all $g \in T$,
\[
\tau_g(\DInd_K^G[V]) =  0.
\]
\end{itemize}
\end{theorem}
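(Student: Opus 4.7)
The plan is to apply Theorem \ref{thm fixed pt} with $M = G/K$, $E = E_V$, and $D = D_V$, so that $\ind_G(D_V) = \DInd_K^G[V]$. For $g \in T$, the fixed point set $(G/K)^g = \{hK : h^{-1}gh \in K\}$ contains $eK$, whose $Z_G(g)$-orbit is the totally geodesic submanifold $Z_G(g)/Z_K(g) \subset G/K$; the cutoff function $c^g$ integrates to one on each $Z_G(g)$-orbit in $(G/K)^g$, so the right-hand side of \eqref{eq fixed pt} reduces to a sum over such orbits.

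For part (a), when $\rank(G) = \rank(K)$, $T$ is a Cartan of $G$. For $g \in T$ regular in $G$, $Z_G(g) = T = Z_K(g)$, so the orbit through $eK$ collapses to the isolated point $eK$, with tangent space $\kp$ on which $g$ acts via $\Ad(g)$. Standard localization at an isolated fixed point of a Dirac-type operator yields the local contribution
\[
\frac{\chi_V(g)\,\chi_{\Delta_{\kp}}(g)}{\det_{\C}\bigl(1 - \Ad(g)^{-1}|_{\kp^{\C}}\bigr)}.
\]
The denominator is computed from the decomposition of $\kp^{\C}$ into root spaces indexed by $\pm R_n^+$ and the identity $(1-e^{\alpha})(1-e^{-\alpha}) = -(e^{\alpha/2} - e^{-\alpha/2})^2$, together with \eqref{eq char Delta p}, to be $(-1)^{\dim(G/K)/2}\chi_{\Delta_{\kp}}(g)^2$. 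Hence the local contribution equals $(-1)^{\dim(G/K)/2}\chi_V(g)/\chi_{\Delta_{\kp}}(g)$, which is the first equality in (a). The second equality is then immediate from Weyl's character formula for $\chi_V|_T$ combined with \eqref{eq char Delta p}.

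For part (b), when $\rank(G) > \rank(K)$, for almost all $g \in T$ the fixed component $Z_G(g)/Z_K(g)$ through $eK$ has positive even dimension (even because $\dim(G/K)$ is even and the normal directions come in complex-conjugate root space pairs). Let $\kp' = \kp^{\Ad(Z_G(g))} \ne 0$ be its tangent space at $eK$ and $\kp''$ the orthogonal complement in $\kp$. Under Clifford decomposition, $\Delta_{\kp} \cong \Delta_{\kp'} \otimes \Delta_{\kp''}$ as graded modules, and since $g$ acts trivially on $\kp'$, its supertrace on $\Delta_{\kp'}$ equals the graded dimension $\Str(1|_{\Delta_{\kp'}}) = 0$. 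This factor appears multiplicatively in the localized symbol's Chern character, forcing the integrand of \eqref{eq fixed pt} to vanish identically over $TM^g$.

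The main obstacle is the sign bookkeeping in part (a): extracting the precise factor $(-1)^{\dim(G/K)/2}$ requires careful tracking of orientation conventions on $\kp$, the complex structure on $\kp^{\C}$ induced by $R_n^+$, and the implicit orientations in Theorem \ref{thm fixed pt}. A secondary subtlety in (b) is ensuring that the vanishing of the zeroth-order supertrace propagates to the full cohomological integrand; this holds because $\Delta_{\kp'}$ enters as a trivially-acted, $\Z_2$-graded Clifford tensor factor throughout the localization, so every higher-degree Chern-character component inherits the same vanishing prefactor.
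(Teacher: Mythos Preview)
Your treatment of part (a) is essentially the paper's argument: specialise Theorem~\ref{thm fixed pt} to the isolated fixed point $eK$, and compute the denominator via $(1-e^{\alpha})(1-e^{-\alpha})=-(e^{\alpha/2}-e^{-\alpha/2})^2$, which is exactly the content of Lemma~\ref{lem wedge p}. One omission is that you only argue the $Z_G(g)$-orbit through $eK$ is a point, not that $eK$ is the \emph{only} fixed point; the paper handles this by identifying $G/K\cong\kp$ as $T$-spaces, so that for $g$ with dense powers in $T$ one has $(G/K)^g=(G/K)^T=\kp^{\Ad(T)}=\{0\}$.

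Your argument for part (b), however, has a genuine gap. The restricted symbol class on $TM^g\cong\kp'\times\kp'$ is not the trivial virtual bundle with fibre $\Delta_{\kp'}\otimes\Delta_{\kp''}\otimes V$; it is the \emph{Bott class} $\beta_{\kp'}$ along the fibre direction, tensored with the constant class $[\Delta_{\kp''}\otimes V]$ (this is the paper's Lemma~\ref{lem sigma DV}). Consequently the factor contributed by the $\Delta_{\kp'}$-piece to the Chern character is $\ch(\beta_{\kp'})$, not the scalar $\Str(1|_{\Delta_{\kp'}})$. The degree-zero component of $\ch(\beta_{\kp'})$ is indeed $\Str(1|_{\Delta_{\kp'}})=0$, but its top-degree component is the Thom form, which is nonzero. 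So the assertion that ``every higher-degree Chern-character component inherits the same vanishing prefactor'' is precisely what fails: the Clifford action by tangent vectors in $\kp'$ is what produces the nonzero higher Chern classes, and there is no multiplicative scalar $\Str(1|_{\Delta_{\kp'}})$ in front.

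The paper obtains the vanishing by a different mechanism (Lemma~\ref{lem Dirac Rn}): it identifies $\int_{T\kp'} c\,\ch(\pi^*\beta_{\kp'})$ with the $L^2$-index of the $\Spin$-Dirac operator on $\kp'\cong\R^{2n}$, which is zero because that operator has purely continuous spectrum. Equivalently, one can argue by form degree: the integrand $c\cdot\ch(\pi^*\beta_{\kp'})$ has total degree at most $\dim\kp'$ (coming entirely from the fibre direction, since $c$ and the Todd class and the normal-bundle factor are all degree zero here), which is strictly less than $\dim T\kp'=2\dim\kp'$, so the top-degree part of the integrand vanishes. Either of these replaces your supertrace argument.
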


Let $\ka \subset \kp$ be an abelian subspace such that $Z_{\kg}(\kt) = \kt \oplus \ka$. Let $c \in C_c(\ka)$ be a function whose integral over $\ka$ is $1$. Let $\sigma_{D_V}$ be the principal symbol of $D_V$.
\begin{lemma} \label{lem fixed pt 1}
For almost all $g \in T$,
\[
\tau_g(\DInd_K^G[V]) = 
\int_{T\ka}c \frac{\ch\bigl([\sigma_{D_V}|_{\supp(c)}](g)\bigr)}{\ch\bigl([\ka \times \Bigwedge \kp/\ka\otimes \C] (g) \bigr)}.
\]
\end{lemma}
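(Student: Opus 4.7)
The strategy is to specialise the general fixed point formula, Theorem \ref{thm fixed pt}, to $M = G/K$ equipped with the left $G$-action, and to the Dirac-type operator $D = D_V$. By construction, $\ind_G(D_V) = \DInd_K^G[V]$, so the left-hand side of \eqref{eq fixed pt} matches the left-hand side of the lemma. The task is then to show that the geometric data on the right-hand side of \eqref{eq fixed pt} specialise precisely to the integrand on the right-hand side of the lemma.

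The key step is identifying the fixed-point geometry. For $g \in T$ with dense powers in a maximal torus, $Z_G(g) = Z_G(T) = T \cdot A$ with $A := \exp(\ka)$. A standard argument shows that a coset $hK \in G/K$ satisfies $g \cdot hK = hK$ (equivalently $h^{-1}gh \in K$) if and only if $h \in Z_G(g) K$, giving
\[
(G/K)^g \cong Z_G(g)K/K = AK/K \cong A,
\]
which via $\exp$ is identified with $\ka$. The $G$-equivariant isomorphism $T_{eK}(G/K) \cong \kp$ then identifies the restriction of $T(G/K)$ to $(G/K)^g$ with the trivial $A$-bundle $A \times \kp$; the sub-bundle tangent to the fixed-point set corresponds to $A \times \ka$, so the normal bundle is $\cN \cong A \times \kp/\ka$, precisely matching the denominator in the claim. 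Flatness of $\ka$ gives $\Todd(T\ka \otimes \C) = 1$, eliminating that factor from the numerator.

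The last ingredient is the cutoff function. Choose $c^g$ on $(G/K)^g \cong \ka$ to equal $c$. The normalisation in Theorem \ref{thm fixed pt} reads $\int_{Z_G(g)} c^g(hm)\, dh = 1$; decomposing the Haar measure on $Z_G(g) = T \cdot A$ as $dt \otimes da$ and using that $T \subset K$ acts trivially on $(G/K)^g$, this reduces to $\vol(T) \int_\ka c = 1$. With the normalisation $\vol(T) = 1$ (compatible with $dg$ and $d(h Z_G(g))$), this is exactly the hypothesis $\int_\ka c = 1$. Substituting all of the above into \eqref{eq fixed pt} yields the claimed formula.

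The main obstacle is the fixed-point identification: while the bulk of the argument is routine translation, one must verify that the diffeomorphism $(G/K)^g \cong \ka$ intertwines the measure and $Z_G(g)$-action bookkeeping in \eqref{eq fixed pt} with those on the right-hand side of the lemma, and in particular that the symbol $[\sigma_{D_V}|_{\supp(c^g)}](g)$ appearing in Theorem \ref{thm fixed pt} is literally the one appearing in the claim. A minor additional issue is handling the measure-zero set of non-regular or non-FGOI elements of $T$ flagged in Remark \ref{rem FGOI}.
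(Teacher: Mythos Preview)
Your proposal is correct and follows essentially the same approach as the paper: specialise Theorem~\ref{thm fixed pt} to $M = G/K$, identify $(G/K)^g \cong \ka$ with trivial normal bundle $\ka \times \kp/\ka$, note that the Todd class is $1$, and read off the formula. The only minor difference is that the paper computes the fixed-point set via the $K$-equivariant diffeomorphism $G/K \cong \kp$ (giving $(G/K)^g = (G/K)^T = \kp^{\Ad(T)} = \ka$ directly), which is slightly cleaner than your coset argument through $Z_G(g)K/K$ and sidesteps the converse inclusion you flagged as needing care.
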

\begin{proof}
Let $g \in T$ be such that its powers are dense in $T$, and with FGOI (see Remark \ref{rem FGOI}). 
 By Proposition 4.2 in \cite{HW2}, almost all elements of $T$ have these two properties.

We have $G/K \cong \kp$ as $K$-spaces, hence in particular as $T$-spaces. Hence
\[
(G/K)^g = (G/K)^T = \kp^{\Ad(T)} = \ka.
\]
Set $A := \exp(\ka)$; this is the centraliser of $g$ in $\exp(\kp)$. 
We have $\kp = \ka \oplus \kp/\ka$ as representations of $T$. So
the normal bundle in $G/K = \kp$ to $(G/K)^g = \ka$ is $\ka \times \kp/\ka \to \ka$. 
The Todd class of the trivial bundle $T(G/K)^g \otimes \C \to (G/K)^g$ is $1$. Hence the claim follows from 
 Theorem \ref{thm fixed pt}. 
\end{proof}

Let us compute $[\sigma_{D_V}|_{\supp(c)}]$. Let $\beta_{\ka} \in K^0(\ka)$ be the Bott generator. (Note that $\ka$ is even-dimensional since $G/K$ is.) Let $\pi\colon T\ka \to \ka$ be the tangent bundle projection, and $\pi|_{\supp(c)}\colon \supp(c) \times \ka\to \supp(c)$ its restriction. Note that
\[
\Delta_{\kp} \cong \Delta_{\ka} \otimes \Delta_{\kp/\ka}
\]
as graded representations of $\tilT$. These descend to $T$ after tensoring with $V$.

\begin{lemma}\label{lem sigma DV}
Under the isomorphism
\[
K_0^T(\supp(c) \times \ka) \cong K_0(\supp(c) \times \ka) \otimes R(T),
\]
we have
\[
[\sigma_{D_V}|_{\supp(c)}] \mapsto \pi|_{\supp(c)}^*\beta_{\ka} \otimes [\Delta_{\kp/\ka} \otimes V].
\]
\end{lemma}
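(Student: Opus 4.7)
The plan is to decompose the principal symbol of $D_V$ along the $T$-invariant orthogonal splitting $\kp = \ka \oplus \kp/\ka$, and to identify its restriction to $T(G/K)^g|_{\supp(c)}$ with the claimed external product. Under the $K$-equivariant diffeomorphism $G/K \cong \kp$, the fixed-point set of the generic element $g \in T$ becomes $\ka$, and $T(G/K)|_\ka$ is the trivial bundle $\ka \times \kp$, which splits $T$-equivariantly as $T\ka \oplus \cN$ with $T\ka = \ka \times \ka$ carrying the trivial $T$-action and $\cN = \ka \times \kp/\ka$ trivial in the base but with nontrivial adjoint $T$-action in the fibre.

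The $\Z_2$-graded Clifford tensor product $\Cl(\kp) \cong \Cl(\ka)\,\hat\otimes\,\Cl(\kp/\ka)$ gives $\Delta_\kp \cong \Delta_\ka \,\hat\otimes\, \Delta_{\kp/\ka}$ as graded $\tilT$-representations, with
\[
c_\kp(\xi_\ka + \xi_{\kp/\ka}) = c_\ka(\xi_\ka) \otimes 1 + \Gamma_\ka \otimes c_{\kp/\ka}(\xi_{\kp/\ka}),
\]
where $\Gamma_\ka$ is the grading operator on $\Delta_\ka$. Hence at $(m,\xi) \in T(G/K)|_\ka$ with $\xi = \xi_\ka + \xi_{\kp/\ka}$,
\[
\sigma_{D_V}(m,\xi) = i\bigl(c_\ka(\xi_\ka)\otimes 1 + \Gamma_\ka \otimes c_{\kp/\ka}(\xi_{\kp/\ka})\bigr)\otimes 1_V
\]
on $\Delta_\ka \otimes \Delta_{\kp/\ka} \otimes V$.

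Next, the restriction appearing in Theorem \ref{thm fixed pt}, namely the class $[\sigma_{D_V}|_{\supp(c)}] \in K_0^T(T(G/K)^g|_{\supp(c)}) = K_0^T(\supp(c) \times \ka)$, is obtained by pulling back to the zero section of $\cN|_{\supp(c)}$ inside $T(G/K)|_{\supp(c)}$, i.e.\ by setting $\xi_{\kp/\ka} = 0$. The symbol then reduces to $ic_\ka(\xi_\ka) \otimes 1_{\Delta_{\kp/\ka} \otimes V}$. In $K$-theory this is the external product of the Bott symbol class on $\ka$, which by construction represents $\beta_\ka$, with the $T$-equivariantly trivial virtual bundle over $\supp(c)\times\ka$ whose fibre is $\Delta_{\kp/\ka} \otimes V$. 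Under the Künneth-type splitting $K_0^T(\supp(c) \times \ka) \cong K_0(\supp(c) \times \ka) \otimes R(T)$ (valid because $T$ acts trivially on $\supp(c)\times\ka$), these two factors become $\pi|_{\supp(c)}^* \beta_\ka$ and $[\Delta_{\kp/\ka}\otimes V]$ respectively, giving the claim. The latter descends from $R(\tilT)$ to $R(T)$ because $V \in \hat K_{\Spin}$.

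The main obstacle is bookkeeping of the $\Z_2$-gradings in the Clifford tensor product so that the ``frozen'' normal contribution comes out as the plain $[\Delta_{\kp/\ka}]$ with its natural grading, and so that the Bott orientation on $\ka$ is not inverted by the grading operator $\Gamma_\ka$. A secondary issue is justifying that the ``restriction of $[\sigma_{D_V}]$'' used in Theorem \ref{thm fixed pt} really is the pullback to the zero section of the pulled-back normal bundle described above, i.e.\ that setting $\xi_{\kp/\ka}=0$ is the correct operational interpretation. Once these points are settled, the identification follows directly from the multiplicativity of the Dirac symbol under the Clifford tensor product.
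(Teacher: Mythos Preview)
Your proposal is correct and follows essentially the same route as the paper: both trivialise $E_V$ over $\ka$, use the splitting $\Delta_{\kp}\cong\Delta_{\ka}\otimes\Delta_{\kp/\ka}$, and observe that on cotangent vectors $Y\in\ka$ the symbol reduces to $c_{\ka}(Y)\otimes 1_{\Delta_{\kp/\ka}\otimes V}$, which is exactly $\pi|_{\supp(c)}^*\beta_{\ka}\otimes[\Delta_{\kp/\ka}\otimes V]$. The paper handles your grading/sign worry simply by absorbing any sign into the definition of $\beta_{\ka}$ (see the footnote there), and your interpretation of the restriction as setting $\xi_{\kp/\ka}=0$ is precisely what the paper means by evaluating $\sigma_{D_V}$ at $(X,Y)\in\ka\times\ka=T\ka$.
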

\begin{proof}
Let $c_{\ka} \colon \ka \to \End(\Delta_{\ka})$ be the Clifford action. The class  $\pi|_{\supp(c)}^*\beta_{\ka} \in K^0(\supp(c) \times \ka)$ is defined by\footnote{We absorb a possible sign in the definition of $\beta_{\ka}$; see Lemma 4.1 in \cite{Connes82}.} the vector bundle homomorphism
\[
A\colon \supp(c) \times \Delta_{\ka}^+ \to \supp(c) \times \Delta_{\ka}^- 
\]
given by
\[
A_Y = c_{\ka}(Y)
\]
for all $Y \in \supp(c)$.

We have
\[
(G\times_K (\Delta^{\pm}_{\kp} \otimes V))|_\ka \cong \ka \times  \Delta^{\pm}_{\kp} \otimes V
\]
as $T$-vector bundles. So
\[
\pi|_{\supp(c)}^*\bigl((G\times_K (\Delta^{\pm}_{\kp} \otimes V))|_{\supp(c)} \bigr) = (\supp(c) \times \ka) \times \Delta^{\pm}_{\kp} \otimes V.
\]
Let $X,Y \in \ka$, so that, using the above identification, we get 
\beq{eq sigma DV}
\sigma_{D_V}(X,Y) = c_{\kp}(Y) \otimes 1_V\colon \Delta^{+}_{\kp} \otimes V \to \Delta^{-}_{\kp} \otimes V.
\eeq

Since $Y \in \ka$, the map \eqref{eq sigma DV} equals the odd endomorphism
\[
c_{\ka}(Y) \otimes 1_{\Delta_{\kp/\ka} \otimes V} \in \End(\Delta_{\ka} \otimes \Delta_{\kp/\ka} \otimes V)
\]
Together with the above form of the class $\pi|_{\supp(c)}^*\beta_{\ka}$, this implies the claim.
\end{proof}

\begin{lemma} \label{lem wedge p}
Suppose that $\rank(G) = \rank(K)$. Then
\[
\Bigwedge \kp\otimes \C =  (-1)^{\dim(G/K)/2}\Delta_{\kp}\otimes  \Delta_{\kp}
\]
as graded representations of $T$.
\end{lemma}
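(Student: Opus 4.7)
The plan is to verify the identity at the level of graded characters on $T$. The equal-rank assumption means $T$ is a Cartan subgroup of $G$, so the weights of $\kp \otimes \C$ under $T$ are exactly the noncompact roots $\pm\alpha$ for $\alpha \in R_n^+$. Since virtual graded representations of the torus $T$ are determined by their graded characters, it suffices to compare graded characters on $T$ (or, after pullback, on $\tilT$).

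For the left hand side, the standard identity $\chi_{\Bigwedge \C_\mu} = 1 - e^\mu$ for a weight-$\mu$ line, together with $\Bigwedge(V \oplus W) = \Bigwedge V \otimes \Bigwedge W$ as graded tensor products, yields
\[
\chi_{\Bigwedge \kp \otimes \C} = \prod_{\alpha \in R_n^+} (1-e^\alpha)(1-e^{-\alpha}).
\]
On the other hand, squaring the defining product formula \eqref{eq char Delta p} for $\chi_{\Delta_{\kp}}$ gives
\[
\chi_{\Delta_{\kp} \otimes \Delta_{\kp}} = \prod_{\alpha \in R_n^+}(e^{\alpha/2}-e^{-\alpha/2})^2,
\]
which is manifestly invariant under the non-trivial element of $\ker(\tilT \to T)$ and hence descends to a class function on $T$, as it must since $\Delta_{\kp} \otimes \Delta_{\kp}$ descends to $T$.

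The entire content of the lemma is then the elementary factor-by-factor identity
\[
(1-e^\alpha)(1-e^{-\alpha}) = 2 - e^\alpha - e^{-\alpha} = -(e^{\alpha/2} - e^{-\alpha/2})^2.
\]
Taking the product over $\alpha \in R_n^+$ produces an overall sign $(-1)^{|R_n^+|}$, and since $|R_n^+| = \tfrac{1}{2}\dim_{\R}\kp = \tfrac{1}{2}\dim(G/K)$, this is exactly $(-1)^{\dim(G/K)/2}$, matching the statement of the lemma.

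I do not anticipate any serious obstacle: the proof reduces to a clean character computation on the torus. The only subtleties are keeping the $\Z_2$-gradings consistent on both sides (so that we are comparing \emph{graded} virtual representations, not merely underlying vector spaces), and the routine check that the right-hand virtual $\tilT$-representation actually descends to $T$.
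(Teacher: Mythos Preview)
Your proof is correct. It differs from the paper's argument in that you work purely at the level of characters, squaring the product formula \eqref{eq char Delta p} and matching it factor-by-factor against the graded character $\prod_{\alpha}(1-e^{\alpha})(1-e^{-\alpha})$ of $\Bigwedge \kp\otimes\C$ via the elementary identity $(1-e^{\alpha})(1-e^{-\alpha}) = -(e^{\alpha/2}-e^{-\alpha/2})^2$. The paper instead proceeds structurally: it chooses the complex structure on $\kp$ for which $\kp^{1,0}$ is the sum of the positive noncompact root spaces, writes $\Bigwedge \kp\otimes\C = \Bigwedge_{\C}\kp \otimes (\Bigwedge_{\C}\kp)^*$, invokes the identification $\Bigwedge_{\C}\kp = (-1)^{\dim(G/K)/2}\Delta_{\kp}\otimes\C_{\rho_n}$ (quoted from an earlier paper), and then uses $\Delta_{\kp}^* \cong (-1)^{\dim(G/K)/2}\Delta_{\kp}$. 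Your route is more self-contained and elementary, needing nothing beyond \eqref{eq char Delta p}; the paper's route is more conceptual, making visible the role of the complex structure and the twist by $\C_{\rho_n}$, at the cost of citing an external lemma. Both arrive at the same sign count $|R_n^+| = \dim(G/K)/2$.
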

\begin{proof}
The set of positive noncompact roots $R_n^+$ determines a complex structure on $\kp$ such that $\kp^{1,0}$ is the sum of the positive noncompact root systems.
As graded representations of $T$, we have
\[
\Bigwedge \kp\otimes \C = \Bigwedge \kp^{1,0} \otimes \Bigwedge \kp^{0,1} = \Bigwedge_{\C} \kp \otimes (\Bigwedge_{\C} \kp)^*. 
\]
The element $\rho_n \in i\kt^*$ is integral for $\tilT$, and  $\Delta_{\kp} \otimes \C_{\rho_n}$ descends to a representation of $T$. We have
\[
\Bigwedge_{\C} \kp = (-1)^{\dim(G/K)/2} \Delta_{\kp} \otimes\C_{\rho_n}
\]
as graded representations of $T$; see for example the proof of Lemma 5.5 in \cite{HW2}. Since $\Delta_{\kp}^* \cong (-1)^{\dim(G/K)/2} \Delta_{\kp}$, we conclude that
\[
\Bigwedge \kp\otimes \C = \Delta_{\kp} \otimes \Delta_{\kp}^* = (-1)^{\dim(G/K)/2}\Delta_{\kp}\otimes  \Delta_{\kp}.
\]
The nontrivial element of the kernel of the covering map $\tilK \to K$ acts on $\Delta_{\kp}$ as $\pm 1$; therefore $\Delta_{\kp}\otimes  \Delta_{\kp}$ descends to a representation of $T$.
\end{proof}

\begin{lemma} \label{lem Dirac Rn}
Let $c$ be a nonnegative, compactly supported, continuous function on $\R^{2n}$ with integral $1$. Let $\beta \in K^0(\R^{2n})$ be the Bott class, and $\pi|_{\supp(c)} \colon \supp(c) \times \R^{2n} \to \supp(c)$ where $\pi: T\R^{2n}\rightarrow\R^{2n}$ the natural projection. Then
\beq{eq int Bott}
\int_{\R^{2n}\times \R^{2n}} c\ch(\pi|_{\supp(c)}^*\beta) = 0.
\eeq
\end{lemma}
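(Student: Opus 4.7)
The plan is a dimension-count argument. Since $\R^{2n}$ is contractible, the compactly supported de Rham cohomology $H^*_c(\R^{2n})$ is concentrated in top degree $2n$, so $\ch(\beta) \in H^{ev}_c(\R^{2n})$ is represented by a compactly supported $2n$-form $\omega$ on $\R^{2n}$, normalised so that $\int_{\R^{2n}} \omega = 1$ (the Bott-generator normalisation).

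Under the base projection $\pi \colon T\R^{2n} = \R^{2n} \times \R^{2n} \to \R^{2n}$, the pullback $\pi^*\omega$ is a $2n$-form on the $4n$-dimensional manifold $\R^{2n} \times \R^{2n}$ whose components involve only the base differentials $dx^1, \ldots, dx^{2n}$ and no fiber differentials $d\xi^j$. Multiplying by the scalar function $c$ preserves the degree at $2n$, which is strictly less than $4n = \dim(\R^{2n} \times \R^{2n})$. Consequently the integrand $c \cdot \ch(\pi|_{\supp(c)}^*\beta)$ never reaches top degree on $\R^{2n} \times \R^{2n}$, and its integral over the tangent bundle vanishes, establishing \eqref{eq int Bott}.

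The argument is independent of the choice of representative: any other de Rham representative of $\ch(\beta)$ differs from $\omega$ by an exact compactly supported form plus components of degree strictly less than $2n$, and the latter are themselves exact since $H^k_c(\R^{2n}) = 0$ for $k < 2n$. After pulling back by $\pi$ and multiplying by $c$, all such corrections remain of degree strictly less than $4n$, contributing nothing to the integral. There is essentially no obstacle beyond confirming that the base projection strips the Chern character of all fiber-direction content, so that the top-degree piece of the integrand automatically vanishes. This is precisely what will force the non-equal-rank case of Theorem \ref{thm fixed GK} to yield zero when combined with Lemmas \ref{lem fixed pt 1} and \ref{lem sigma DV}.
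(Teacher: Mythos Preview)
Your dimension-count argument is correct, and it is genuinely different from the paper's proof. The paper identifies the integral \eqref{eq int Bott} with the $L^2$-index of the $\Spin$-Dirac operator on $\R^{2n}$ via an $L^2$-index formula (Proposition~6.11 in \cite{Wang14}), and then appeals to the fact that this operator has purely continuous spectrum, hence trivial $L^2$-kernel. Your approach sidesteps index theory entirely: any de Rham representative of $\ch(\beta)$ is a form on the $2n$-dimensional space $\R^{2n}$ and therefore has degree at most $2n$; pulling back along $\pi$ and multiplying by the $0$-form $c$ cannot raise the degree, so the integrand has no component in top degree $4n$ on $T\R^{2n}$, and the integral vanishes. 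This is more elementary and more robust---in particular it does not depend on knowing the spectrum of the Dirac operator on Euclidean space. The paper's route, on the other hand, makes the link to $L^2$-index theory explicit, which fits the overall theme of the paper (that $\tau_e$ computes $L^2$-indices) and explains \emph{why} the non-equal-rank case collapses from an analytic viewpoint.

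One small remark: your paragraph on independence of representative is more elaborate than necessary. You do not need $H^k_c(\R^{2n})=0$ for $k<2n$, nor do you need to choose a degree-$2n$ representative. \emph{Every} form on $\R^{2n}$ has degree $\leq 2n$ simply because $\dim \R^{2n}=2n$, so the degree bound on $c\cdot\pi^*\ch(\beta)$ holds for any Chern--Weil representative automatically. The cohomological discussion can be dropped.
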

\begin{proof}
By Proposition 6.11 in \cite{Wang14}, the integral \eqref{eq int Bott} equals the $L^2$-index of the $\Spin$-Dirac operator on $\R^{2n}$. That index is zero because the $L^2$-kernel of this Dirac operator is zero. Indeed, the $\Spin$-Dirac operator on $\R^{2n}$ only has continuous spectrum, see for example Theorem 7.2.1 in \cite{Ginoux76}.
\end{proof}

\begin{proof}[Proof of Theorem \ref{thm fixed GK}]
Lemma \ref{lem sigma DV} implies that
\[
\ch\bigl([\sigma_{D_V}|_{\supp(c)}](g)\bigr)= \ch(\pi|_{\supp(c)}^*\beta_{\ka}) (\chi_{\Delta_{\kp/\ka}}\chi_V)(g).
\]
Furthermore, 
\[
\ch\bigl([\ka \times \Bigwedge \kp/\ka\otimes \C] (g) \bigr)= \chi_{\bigwedge \kp/\ka \otimes \C}(g)
\]
in the graded sense. So by Lemma \ref{lem fixed pt 1},
\[
\tau_g(\DInd_K^G[V]) = 
 \frac{\chi_{\Delta_{\kp/\ka}}\chi_V}{\chi_{\bigwedge \kp/\ka \otimes \C}}(g)
\int_{T\ka}c \ch(\pi|_{\supp(c)}^*\beta_{\ka}).
\]

If $\rank(G) \not=\rank(K)$, then $\ka$ is nonzero, and the claim follows from Lemma \ref{lem Dirac Rn}. 
If $\rank(G) =\rank(K)$, then  Lemma \ref{lem wedge p} implies that
\[
\tau_g(\DInd_K^G[V]) =  (-1)^{\dim(G/K)/2}\frac{\chi_V}{\chi_{\Delta_{\kp}}}(g);
\]
in particular, the right hand side is well-defined.
The claim now follows from Weyl's character formula and \eqref{eq char Delta p}. (Note that $(\tilK,\tilT)$ and $(K,T)$ have the same Weyl group $W_K$, since they have the same root system.)
\end{proof}

\begin{remark}
If $g=e$, then $\tau_e(\DInd_K^G[V])$ is the $L^2$-index of $D_V$ by Proposition 4.4 in \cite{Wang14}. That index is zero if the kernel of $D_V$ is zero. Theorem \ref{thm fixed GK} shows that,  in the equal-rank case, the more general trace $\tau_g$ yields nonzero information even in cases where the kernel of $D_V$ is zero (see also Section~\ref{sec SL2}). 
\end{remark}

\section{Consequences}

Suppose from now on that $\rank(G) = \rank(K)$.

\subsection{Distinguishing $K$-theory classes}

As a consequence of Theorem \ref{thm fixed GK}, the traces $\tau_g$ `separate points' on $K_0(C^*_rG)$, or distinguish all elements of $K_0(C^*_rG)$, in the following sense.
\begin{corollary}\label{cor sep pts}
Let $x\in K_0(C^*_rG)$. If $\tau_g(x)=0$ for all $g$ in a dense subset of $T$, then $x=0$.
\end{corollary}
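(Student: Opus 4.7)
The plan is to use the Connes--Kasparov isomorphism to reduce the statement to the vanishing of a virtual character of $\tilK$, and then to apply Theorem \ref{thm fixed GK}(a) together with real-analyticity of characters.

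First, since Dirac induction $\DInd_K^G \colon R_{\Spin}(K) \to K_0(C^*_rG)$ is an isomorphism by the Connes--Kasparov theorem, we write $x = \DInd_K^G[\xi]$ for a unique $\xi \in R_{\Spin}(K)$; it suffices to prove $\xi = 0$. As $\xi$ is a virtual representation of $\tilK$, it is determined by its character $\chi_\xi$ on $\tilT$, so it is enough to show $\chi_\xi \equiv 0$ on $\tilT$.

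Next, by $\Z$-linearity of $\DInd_K^G$ and Theorem \ref{thm fixed GK}(a), for almost every $g \in T$ (in the sense of Remark \ref{rem FGOI}),
\[
\tau_g(x) = (-1)^{\dim(G/K)/2} \frac{\chi_\xi(g)}{\chi_{\Delta_{\kp}}(g)}.
\]
Let $T_{\reg}$ denote the open dense subset of $T$ on which $\chi_{\Delta_{\kp}}$ does not vanish; the right-hand side is a real-analytic function there. Using the classical continuity of orbital integrals $g \mapsto \tau_g(f)$ for $f \in \cC(G)$ on the regular semisimple set, the identity above extends from the full-measure subset to all of $T_{\reg}$ by continuity of both sides.

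Now let $D \subset T$ be the dense subset on which $\tau_g(x) = 0$. Since $T_{\reg}$ is open and dense in $T$, the intersection $D \cap T_{\reg}$ is dense in $T_{\reg}$, and hence $\chi_\xi / \chi_{\Delta_{\kp}}$ vanishes on a dense subset of $T_{\reg}$. By continuity it vanishes on all of $T_{\reg}$, so $\chi_\xi \equiv 0$ on $T_{\reg}$; by real-analyticity and connectedness of $T$, $\chi_\xi \equiv 0$ on all of $T$. Therefore $\xi = 0$ and $x = 0$.

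The main obstacle is reconciling the hypothesis (vanishing on an arbitrary dense subset of $T$) with the fixed-point formula (valid only almost everywhere): a priori the dense set $D$ and the full-measure set on which Theorem \ref{thm fixed GK}(a) holds could be disjoint. The cleanest bridge is the continuity of orbital integrals in the parameter $g$ on $T_{\reg}$, which transfers well-definedness from a measure-theoretic to a topological footing; alternatively one could exploit the Baire-generic nature of the set of elements of $T$ with FGOI and dense powers.
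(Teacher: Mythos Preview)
Your proof is correct and follows essentially the same route as the paper: use surjectivity of Dirac induction to write $x=\DInd_K^G[\xi]$, apply Theorem~\ref{thm fixed GK}(a), and conclude $\chi_\xi\equiv 0$ by a continuity argument. You are in fact more careful than the paper about the potential mismatch between the dense hypothesis set and the full-measure set where the fixed point formula holds, and your resolution via continuity of $g\mapsto\tau_g(f)$ on $T_{\reg}$ is exactly the right bridge; the paper's proof invokes only ``continuity of the characters $\chi_V$'' and leaves this point implicit.
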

\begin{proof}
Let $x\in K_0(C^*_rG)$. 
By surjectivity of Dirac induction, we can write
\[
x = \sum_{V\in \hat K_{\Spin}} m_{V}\DInd_K^G[V],
\]
for $m_{V} \in \Z$, finitely many nonzero.
By Theorem \ref{thm fixed GK}, we have for almost all $g\in T$,
\[
\tau_g(x) = (-1)^{\dim(G/K)/2}\sum_{V\in \hat K_{\Spin}} m_{V}\frac{\chi_{V}}{\chi_{\Delta_{\kp}}}(g).
\]
So if $\tau_g(x)=0$ for all $g$ in a dense subset of $T$, then 
by continuity and conjugation invariance of the characters $\chi_{V}$, we find that 
%
%
\[
\sum_{V\in \hat K_{\Spin}} m_{V}{\chi_{V}} = 0.
\]
So $m_{V} = 0$ for all $V$, i.e.\ $x=0$.
\end{proof}

\subsection{$K$-theory and distributions}

Let $G^{\reg} \subset G$ be the subset of regular elements.

\begin{corollary}\label{cor distr}
The map
\[
\tau\colon K_0(C^*_rG) \to \calD'(G^{\reg})
\]
defined by
\[
\langle \tau(x), f\rangle =\int_{G^{\reg}} \tau_g(x) f(g)\, dg
\]
for $x \in K_0(C^*_rG)$ and $f \in C^{\infty}_c(G^{\reg})$,
is a well-defined, injective group homomorphism.
\end{corollary}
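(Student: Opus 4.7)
The plan is to verify three assertions: (i) that $g\mapsto \tau_g(x)$ extends to a locally integrable function on $G^{\reg}$, so that the integral defining $\langle \tau(x),f\rangle$ makes sense; (ii) that $\tau$ is a group homomorphism; and (iii) that it is injective. Item (ii) is immediate from the linearity of each $\tau_g\colon K_0(C^*_rG)\to\C$ and of the integral against $f$, so the substance lies in (i) and (iii).

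For (i), I would begin by using surjectivity of Dirac induction to write $x = \sum_V m_V \DInd_K^G[V]$ with only finitely many $m_V$ nonzero. Then I would exploit the following dichotomy for $g\in G^{\reg}$: either $g$ is conjugate to an element $t\in T^{\reg}$, or $g$ lies in no compact subgroup of $G$. These cases are mutually exclusive and jointly exhaustive for regular $g$, because every compact subgroup of $G$ is conjugate into $K$ and every element of $K$ is conjugate into $T$. In the first case, Theorem \ref{thm fixed GK}(a) together with the conjugation invariance $\tau_{hgh^{-1}} = \tau_g$ (which holds on the Harish-Chandra Schwartz algebra by a direct change of variables in the definition of orbital integrals, and hence on $K_0$) yields
\[
\tau_g(x) = (-1)^{\dim(G/K)/2}\sum_V m_V \frac{\chi_V}{\chi_{\Delta_{\kp}}}(t).
\]
In the second case, Theorem \ref{thm fixed pt} gives $\tau_g(x) = 0$. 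The two cases correspond to the two open subsets $G\cdot T^{\reg}$ and $G^{\reg}\setminus G\cdot T^{\reg}$ of $G^{\reg}$. On the first, the explicit formula together with the nonvanishing of $\chi_{\Delta_{\kp}}$ on $T^{\reg}$ (from \eqref{eq char Delta p}) shows that $\tau_g(x)$ is smooth; on the second it vanishes. Hence the function is locally bounded, and in particular locally integrable, on $G^{\reg}$.

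For (iii), suppose $\tau(x) = 0$ in $\calD'(G^{\reg})$. Since $\tau(x)$ is represented by the locally integrable function $g\mapsto \tau_g(x)$, this function must vanish almost everywhere on $G^{\reg}$. It therefore vanishes on a dense subset of $T^{\reg}\subset G^{\reg}$, and Corollary \ref{cor sep pts} yields $x = 0$.

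The main technical point is step (i): confirming that $G\cdot T^{\reg}$ is an open subset of $G^{\reg}$, so that the dichotomy yields smoothness on a genuine open piece, and that the smooth formula on $T^{\reg}$ descends to a smooth function on $G\cdot T^{\reg}$ via the smooth submersion $G\times T^{\reg}\to G\cdot T^{\reg}$, $(h,t)\mapsto hth^{-1}$. Once these geometric points are in place, the rest of the argument is essentially formal.
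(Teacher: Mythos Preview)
Your proposal is correct and follows essentially the same approach as the paper: use surjectivity of Dirac induction together with Theorem~\ref{thm fixed GK} on the elliptic regular set and the vanishing in Theorem~\ref{thm fixed pt} on its complement to see that $g\mapsto\tau_g(x)$ agrees almost everywhere with a function that is locally bounded on $G^{\reg}$, and then invoke Corollary~\ref{cor sep pts} for injectivity. The only refinement worth making is to phrase the conclusions of Theorems~\ref{thm fixed GK} and~\ref{thm fixed pt} as holding \emph{almost everywhere} (as the paper does), since both are stated only for almost all $g$; this does not affect local integrability, but makes the logic airtight.
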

\begin{proof}
Let $x \in K_0(C^*_rG)$. By surjectivity of Dirac induction, we can write $x = \DInd_K^G[y]$, for some $y \in R_{\Spin}(K)$. 
Theorem \ref{thm fixed GK} implies that the function $g\mapsto \tau_g(x)$  equals an analytic function almost everywhere on the set of elliptic elements of $G$. Theorem \ref{thm fixed pt} implies that this function equals zero almost everywhere on the set of non-elliptic elements of $G$. So $g\mapsto \tau_g(x)$
equals an analytic function almost everywhere on $G$. Furthermore, that analytic function is bounded on compact subsets of $G^{\reg}$. This implies that $\tau(x)$ is a well-defined distribution on $G^{\reg}$.

If $\tau(x) = 0$, then $\tau_g(x)=0$ for almost all $g \in G^{\reg}$, in particular for almost all elements of $T$. Hence Corollary \ref{cor sep pts} implies that $x=0$.
\end{proof}

\begin{remark}
As noted in the proof of Corollary \ref{cor distr}, the first part of Theorem \ref{thm fixed pt} implies that $\tau(x)$ is zero outside the set of regular elliptic elements of $G$. 
\end{remark}

\begin{remark}
We will describe the map $\tau$ in Corollary \ref{cor distr} explicitly in terms of characters of representations in Section \ref{sec lds}. 
There we will see that $\tau(x)$ equals the character of a tempered representation of $G$ almost everywhere on the set of regular elliptic elements, and zero almost everywhere outside the set of elliptic elements. Therefore, it extends to a distribution on all of $G$ by Harish-Chandra's regularity theorem.
\end{remark}


\subsection{Injectivity of Dirac induction}

We have used the surjectivity of Dirac induction in the proof of Corollary \ref{cor sep pts} (which is justified because the Connes--Kasparov conjecture has been proved). Theorem \ref{thm fixed GK}
 implies injectivity of Dirac induction.
\begin{corollary} \label{cor Dirac inj}
Dirac induction is injective.
\end{corollary}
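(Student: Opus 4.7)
The plan is to apply Theorem \ref{thm fixed GK}(a) directly to an arbitrary element of the kernel of $\DInd_K^G$, together with linear independence of irreducible characters on a maximal torus. Note that the proof of Corollary \ref{cor sep pts} invoked surjectivity of Dirac induction, so to get an \emph{independent} proof of injectivity I must avoid doing so; but the formula in Theorem \ref{thm fixed GK}(a) was established without any use of surjectivity, and it is this formula that does the work.

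Suppose $y \in R_{\Spin}(K)$ lies in the kernel of Dirac induction. Write $y = \sum_{V \in \hat K_{\Spin}} m_V [V]$ as a finite $\Z$-linear combination. By linearity and Theorem \ref{thm fixed GK}(a), for almost every $g \in T$,
\[
0 \;=\; \tau_g\bigl(\DInd_K^G[y]\bigr) \;=\; (-1)^{\dim(G/K)/2}\, \frac{\sum_V m_V \chi_V(g)}{\chi_{\Delta_{\kp}}(g)}.
\]
The denominator $\chi_{\Delta_\kp}|_{\tilT} = \prod_{\alpha \in R_n^+}(e^{\alpha/2}-e^{-\alpha/2})$ vanishes only on a proper real-analytic subvariety of $T$, so $\sum_V m_V \chi_V$ vanishes on a dense subset of $T$, and hence on all of $T$ by continuity of characters.

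Since the $V \in \hat K_{\Spin}$ are pairwise inequivalent irreducible representations of $\tilK$, and $\tilT$ is a maximal torus of $\tilK$, the characters $\chi_V|_{\tilT}$ are linearly independent (by the Weyl character formula: each is the Weyl denominator times an alternating sum of exponentials of Weyl-orbit representatives of its highest weight plus $\rho_c$, and distinct highest weights give linearly independent such sums). Therefore $m_V = 0$ for all $V$, whence $y = 0$. The only potential obstacle is to justify that the zero set of $\chi_{\Delta_\kp}$ does not fill up $T$ in a way that would obstruct the continuity step, but this is immediate from the explicit product formula above.
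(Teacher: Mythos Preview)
Your proof is correct and follows essentially the same approach as the paper's: apply Theorem \ref{thm fixed GK}(a) to an element $y$ of the kernel to deduce $\chi_y/\chi_{\Delta_{\kp}}$ vanishes almost everywhere on $T$, hence $\chi_y=0$, hence $y=0$. Your version is slightly more explicit (writing out the coefficients $m_V$ and spelling out the density and linear-independence steps), but the argument is the same.
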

\begin{proof}
Let $y\in R_{\Spin}(K)$, and suppose that $\DInd_K^G(y)=0$. Then $\tau_g(\DInd_K^G(y))=0$ for all $g\in T$. Theorem \ref{thm fixed GK} implies that for almost all $g \in T$,
\[
\frac{\chi_y}{\chi_{\Delta_{\kp}}}(g) = 0.
\]
So $\chi_y = 0$, i.e.\ $y=0$.
\end{proof}

\subsection{An induction formula}

Let $M$ be an even-dimensional Riemannian manifold with a $G$-equivariant $\Spinc$-structure. Let $E \to M$ be a $G$-equivariant, Hermitian vector bundle. Let $D_M^E$ be the $\Spinc$-Dirac operator on $M$ twisted by $E$. By Abels' theorem \cite{Abels}, there is a $K$-invariant submanifold $N \subset M$ such that $M \cong G\times_N N$ via the action map $G \times N \to M$. Furthermore, $N$ has a $K$-equivariant $\Spinc$-structure on $N$ compatible with the one on $M$, see Proposition 3.10 in \cite{HochsMathai17}. The $\Spinc$-Dirac operator $D_N^E$ on $N$, twisted by $E|_N$, has the property that
\beq{eq Q Ind}
\DInd_K^G(\ind_K(D_N^E)) = \ind_G(D_M^G) \quad \in K_0(C^*_rG).
\eeq
See Theorem 5.2 in \cite{HW2} and Proposition 4.7 in \cite{Hochs09}.

Theorem \ref{thm fixed GK} and surjectivity of Dirac induction imply that the following diagram commutes for all $g$ in the dense subset of $T$ in Theorem \ref{thm fixed GK}:
\beq{eq taug Ind}
\xymatrix{
K_0(C^*_rG) \ar[drrr]^-{ \tau_g }  & & & \\
R_{\Spin}(K) \ar[u]^-{\DInd_K^G} \ar[rrr]_-{\vspace{-3mm}(-1)^{\dim(G/K)/2}\ev_g/\chi_{\Delta_{\kp}}(g)} & & & \C.
}
\eeq
Here $\ev_g$ denotes evaluation of characters of representations  at $g$; note that the bottom arrow is well-defined.

The equality \eqref{eq Q Ind} and commutativity of \eqref{eq taug Ind} imply the following formula for induction from slices.
\begin{corollary} \label{cor induction}
We have, for almost all $g \in T$,
\[
\tau_g(\ind_G(D_M^E)) = (-1)^{\dim(G/K)/2}\ind_K(D_N^E)(g)/\chi_{\Delta_{\kp}}(g).
\]
\end{corollary}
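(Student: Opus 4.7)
The plan is to combine, in essentially one line, the two ingredients that the preceding paragraph has already assembled; no new calculation is required inside this proof. Set $y := \ind_K(D_N^E) \in R_{\Spin}(K)$. By Abels' theorem together with the compatibility of the $\Spinc$-structures on $M$ and $N$, one has the $K$-theoretic identity
\[
\ind_G(D_M^E) = \DInd_K^G(y) \quad \text{in } K_0(C^*_rG),
\]
which is exactly \eqref{eq Q Ind}, quoted from [HW2, Thm 5.2] and [Hochs09, Prop 4.7]. Applying $\tau_g$ to both sides reduces the corollary to evaluating $\tau_g(\DInd_K^G(y))$.

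For that I would invoke the commutativity of diagram \eqref{eq taug Ind}, itself an immediate consequence of Theorem \ref{thm fixed GK}(a): for almost all $g \in T$,
\[
\tau_g(\DInd_K^G(y)) = (-1)^{\dim(G/K)/2}\frac{\chi_y(g)}{\chi_{\Delta_{\kp}}(g)}.
\]
Since $\chi_y(g) = \ind_K(D_N^E)(g)$ is the evaluation at $g$ of the character of the virtual $K$-representation $\ind_K(D_N^E)$, this already is the claimed formula. The qualifier \emph{almost all} $g \in T$ is inherited verbatim from Theorem \ref{thm fixed GK}(a), i.e.\ from the FGOI / dense-powers genericity condition recorded in Remark \ref{rem FGOI}; linearity of both sides in $y$ means the same generic set of $g$ works simultaneously for every $V \in \hat K_{\Spin}$ appearing in the finite expansion of $y$.

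The substantive work is therefore entirely upstream, not here: verifying \eqref{eq Q Ind} requires Abels' slice theorem, the construction of a compatible $K$-equivariant $\Spinc$-structure on $N$, and the multiplicativity of the analytic assembly map under Dirac induction, while the diagram \eqref{eq taug Ind} requires the full fixed-point calculation of Theorem \ref{thm fixed GK} together with surjectivity of Dirac induction. Granting these, the proof of the corollary is a formal substitution, and I would expect no additional obstacle.
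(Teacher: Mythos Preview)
Your proposal is correct and matches the paper's argument exactly: the paper states (just before the corollary) that the result follows from \eqref{eq Q Ind} together with the commutativity of \eqref{eq taug Ind}, which is precisely the two-step substitution you carry out. One small remark: since here you already start from an element $y \in R_{\Spin}(K)$, surjectivity of Dirac induction is not actually needed to make the diagram \eqref{eq taug Ind} commute on $y$---Theorem \ref{thm fixed GK}(a) extended by linearity suffices.
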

Note that the right hand side can be computed via the Atiyah--Segal--Singer fixed point formula \cite{Atiyah68}. 

Induction formulas like Corollary \ref{cor induction} we used in various settings to deduce results about $G$-equivariant indices from results about $K$-equivariant indices \cite{GMW, Hochs09, HM16, HochsMathai17, HW2}. The case $g=e$ is not covered by Corollary \ref{cor induction}; that case is Corollary 53 in \cite{GMW}.

\subsection{Selberg's principle}

The Selberg principle is a vanishing result for orbital integrals of certain convolution idempotents on $G$. See \cite{Blanc92, JulgValette86, JulgValette87} for approaches to this principle in the spirit of noncommutative geometry. Theorem \ref{thm fixed pt}
implies a version of this principle.
\begin{corollary}[$K$-theoretic Selberg principle] \label{cor selberg}
For almost all $g$ not contained in compact subgroups of $G$, the map
\[
\tau_g \colon K_0(C^*_rG) \to \C
\]
is zero.
\end{corollary}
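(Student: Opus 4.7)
The statement is essentially a direct corollary of the first part of the fixed point formula, Theorem \ref{thm fixed pt}, combined with the surjectivity of Dirac induction (the Connes--Kasparov isomorphism, which is also used freely in the surrounding corollaries). My plan is therefore very short.

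First, I would fix an arbitrary class $x \in K_0(C^*_rG)$ and appeal to surjectivity of $\DInd_K^G$ to write
\[
x = \sum_{V \in \hat K_{\Spin}} m_V \DInd_K^G[V] = \sum_V m_V \ind_G(D_V),
\]
with only finitely many $m_V \in \Z$ nonzero. Since each $D_V$ is an honest $G$-equivariant elliptic operator on the proper cocompact $G$-manifold $G/K$, Theorem \ref{thm fixed pt} applies to the individual summands: for each $V$, there is a null set $N_V \subset G$ outside which $\tau_g(\ind_G(D_V)) = 0$ whenever the semisimple element $g$ is not contained in any compact subgroup of $G$ (indeed, the fixed point set $(G/K)^g$ is empty in that case, so the right-hand side of \eqref{eq fixed pt} vanishes trivially).

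Next, because the sum is finite, the exceptional set $N := \bigcup_V N_V$ (restricted to those $V$ with $m_V \neq 0$) is still of measure zero. For every semisimple $g \notin N$ that is not contained in a compact subgroup of $G$, linearity of $\tau_g$ on $K_0(C^*_rG) = K_0(\cC(G))$ gives
\[
\tau_g(x) = \sum_V m_V \tau_g(\ind_G(D_V)) = 0,
\]
which is exactly the assertion of the corollary. Since $x$ was arbitrary, $\tau_g$ itself is the zero map on $K_0(C^*_rG)$ for all such $g$.

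There is essentially no obstacle here beyond invoking the two ingredients correctly; the only thing to watch is that the ``almost all'' clause of Theorem \ref{thm fixed pt} (concretely, the FGOI condition of Remark \ref{rem FGOI}) depends on $V$ only through the operator $D_V$, so one does need to verify that the union of finitely many null sets remains null before concluding that $\tau_g$ vanishes identically on $K_0(C^*_rG)$ for almost every non-compactly-embedded $g$. With that minor bookkeeping in place, the proof is complete.
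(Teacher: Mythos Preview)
Your proof is correct and follows exactly the same route as the paper: apply Theorem~\ref{thm fixed pt} to each generator $\ind_G(D_V)$ and then invoke surjectivity of Dirac induction. One small simplification: by Remark~\ref{rem FGOI} the exceptional null set (the non-FGOI elements) depends only on $g$ and not on the operator, so the sets $N_V$ all coincide and your bookkeeping about finite unions is unnecessary---this is also what makes the quantifier order ``for almost all $g$ the map $\tau_g$ vanishes on all of $K_0(C^*_rG)$'' work without any further argument.
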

\begin{proof}
Theorem \ref{thm fixed pt} implies that for almost all $g$ not contained in compact subgroups of $G$, and all $V \in R_{\Spin}(K)$, we have
\[
\tau_g(\DInd_K^G[V]) = 0.
\]
So surjectivity of Dirac induction implies the claim.
\end{proof}

Corollary \ref{cor selberg} has a purely representation theoretic consequence.
\begin{corollary}[Selberg principle for matrix coefficients of the discrete series] \label{cor selberg ds}
Let $\pi$ be a discrete series representation of $G$. 
Let $v$ be a $K$-finite vector in the representation space of $\pi$, and $m_{v,v}$ the corresponding matrix coefficient. For all $g$ not contained in compact subgroups of $G$, we have
\[
\tau_g(m_{v,v})=0.
\]
\end{corollary}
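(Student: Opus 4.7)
\begin{proofof}{Corollary \ref{cor selberg ds} (plan)}
The plan is to recognise the matrix coefficient $m_{v,v}$ (up to a scalar) as a self-adjoint idempotent in $\cC(G)$, apply Corollary \ref{cor selberg} to the resulting $K$-theory class, and finally upgrade the ``almost all'' vanishing to a ``for all'' statement by a continuity argument.

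First I would reduce to the case where $v$ is a unit vector in the representation space of $\pi$, since $m_{\lambda v, \lambda v} = |\lambda|^2 m_{v,v}$ and the claim is linear (as a pointwise identity in $g$) under such rescaling. Since $v$ is $K$-finite, the matrix coefficient $m_{v,v}$ lies in Harish-Chandra's Schwartz algebra $\cC(G)$ (a standard consequence of the discrete series matrix coefficients being rapidly decreasing). A direct computation with convolution, using the Schur orthogonality relations for the discrete series, gives
\[
m_{v,v} * m_{v,v} = d_{\pi}^{-1} m_{v,v}, \qquad m_{v,v}^* = m_{v,v},
\]
so that $e_v := d_{\pi} m_{v,v}$ is a self-adjoint idempotent in $\cC(G)$. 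Because $\cC(G) \subset C^*_rG$ is a dense subalgebra closed under holomorphic functional calculus, $e_v$ defines a class $[e_v] \in K_0(\cC(G)) = K_0(C^*_rG)$.

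Next, I would apply the $K$-theoretic Selberg principle of Corollary \ref{cor selberg} to $[e_v]$. Since $e_v$ is an idempotent and $\tau_g$ is the trace induced on $K_0(C^*_rG)$, one has
\[
\tau_g(e_v) = \tau_g([e_v]) = 0
\]
for almost all semisimple $g$ not contained in any compact subgroup of $G$. Dividing by $d_{\pi}$ gives $\tau_g(m_{v,v}) = 0$ for almost all such $g$.

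Finally, to strengthen ``almost all'' to ``all'': by Harish-Chandra's continuity theorem, the map $g \mapsto \tau_g(m_{v,v})$ is continuous on the open dense set of regular semisimple elements of $G$, in particular on the subset of regular semisimple elements not contained in any compact subgroup. Combined with the ``almost all'' vanishing just obtained, this forces $\tau_g(m_{v,v}) = 0$ for every regular semisimple $g$ outside all compact subgroups. For a non-regular semisimple $g$ not lying in any compact subgroup, I would approximate $g$ by regular semisimple elements within its Cartan subgroup, chosen so as to remain outside all compact subgroups (this is possible since the non-compact part of $g$ is preserved under such small perturbations), and take a limit using the continuity of the orbital integral extended to semi-regular elements.

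The main obstacle is this last step: obtaining pointwise vanishing rather than the almost-everywhere vanishing handed to us by Corollary \ref{cor selberg}. The identification of $e_v$ as an idempotent and the application of the $K$-theoretic Selberg principle are essentially formal; it is the regularity of $g \mapsto \tau_g(m_{v,v})$ at non-regular semisimple points that requires care, and is where one must invoke Harish-Chandra's finer results on orbital integrals of Schwartz functions.
\end{proofof}
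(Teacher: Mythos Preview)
Your approach is essentially the same as the paper's: normalise $v$, recognise $d_{\pi} m_{v,v}$ (the paper uses $d_{\pi}\bar m_{v,v}$, which differs only by complex conjugation) as an idempotent in $\cC(G)$ via Schur orthogonality, apply Corollary \ref{cor selberg} to its $K$-theory class to get vanishing for almost all non-elliptic semisimple $g$, and then pass from ``almost all'' to ``all'' by continuity. The paper dispatches the last step in a single phrase (``by continuity of $m_{v,v}$''), whereas you correctly identify this as the point requiring Harish-Chandra's results on the regularity of orbital integrals of Schwartz functions; your more careful treatment of that step is an improvement in exposition rather than a different argument.
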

\begin{proof}
Let $d_{\pi}$ be the formal degree of $\pi$. 
By rescaling, we may assume that $v$ has norm $1$.
Then ${d_{\pi}}\bar m_{v,v}$ is an idempotent in $C^*_rG$. Let $[\pi] \in K_0(C^*_rG)$ be its $K$-theory class.
Since $v$ is $K$-finite, the function $m_{v,v}$ lies in Harish-Chandra's Schwartz algebra $\cC(G)$. Therefore, for all semisimple $g \in G$,
\[
\tau_g(m_{v,v}) = \frac{1}{d_{\pi}} \overline{\tau_g([\pi])}.
\]
By Corollary \ref{cor selberg}, the number is zero for almost all $g$ not contained in compact subgroups. The claim therefore follows by continuity of $m_{v,v}$.
\end{proof}

\subsection{A Tannaka-type duality}

We now suppose that the representation $\Delta_{\kp}$ of $\tilK$ descends to $K$. This is true of we replace $G$ by a double cover if necessary. Then Dirac induction is defined on $R(K)$.

The $K$-theory group $K_0(C^*_rG)$ and its elements contain nontrivial information about $G$ and its representations, see e.g.\ \cite{HW2, HW3, Lafforgue02b}. But 
 just the isomorphism class of 
$K_0(C^*_rG)$ as an abelian group contains no information about $G$ whatsoever: this group is always
 free, with countably infinitely many generators.
It turns out, however,  that the combination of the isomorphism class of $K_0(C^*_rG)$,  the topological space $T$ and the maps $\tau_g\colon K_0(C^*_rG)\to \C$, for $g \in T$, together determine the Cartan motion group $K \ltimes \kp$ and vice versa. The tempered representation theory of $K\ltimes \kp$ is closely related to that of $G$; this is the Mackey analogy \cite{Afgoustidis, Higson08, Higson11, Mackey, 
 TianYaoYu, Yu17}. Also, the analytic assembly map for $G$ can be defined in terms of a continuous deformation from $K\ltimes \kp$ to $G$, see pp. 23--24 of \cite{Connes94} and \cite{Higson08}.

This  is vaguely analogous to the fact that the irrational rotation algebras $A_{\lambda}$, for irrational $\lambda$ in $[0,1/2]$, have the same $K$-theory $\Z \oplus \Z$, but are determined up to isomorphism by the pair $(K_0(A_{\lambda}), \tau)$ where $\tau$ is a natural trace. This is because the image of $\tau$ is $\Z + \lambda\Z$. 

\begin{corollary} \label{cor reconstruct}
The 
\begin{itemize}
\item abelian group $K_0(C^*_rG)$ up to isomorphism;
\item pointed topological space $(T, \{e\})$ up to homeomorphism; and
\item  family of group homomorphisms $\tau_g\colon K_0(C^*_rG)\to \C$, for $g \in T$
\end{itemize}
together determine
the Cartan motion group $K\ltimes \kp$, and vice versa. 
\end{corollary}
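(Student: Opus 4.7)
The plan is to verify both implications in the stated equivalence. The forward direction, from $K\ltimes \kp$ to the triple, is straightforward: by Connes--Kasparov we have $K_0(C^*_rG)\cong R(K)$ as abelian groups, with the right-hand side intrinsic to $K$; the pointed torus $(T,\{e\})$ is read off from $K$; and Theorem \ref{thm fixed GK} expresses each $\tau_g$ in closed form via $\chi_V$ and $\chi_{\Delta_{\kp}}$, both internal to $(K,\kp)$.

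For the reverse direction, I would reconstruct the Cartan motion group in three stages. \emph{Stage 1.} By bijectivity of Dirac induction and Theorem \ref{thm fixed GK}, the assignment $x \mapsto (g \mapsto \tau_g(x))$ identifies $K_0(C^*_rG)$ with the free $\Z$-module
\[
H := \bigl\{(-1)^{\dim(G/K)/2}\chi_V/\chi_{\Delta_{\kp}} : V \in R(K)\bigr\}
\]
of functions on a dense open subset of $T$, with distinguished $\Z$-basis indexed by $\hat K$. \emph{Stage 2.} Recover $\chi_{\Delta_{\kp}}$ up to sign as the unique function $F$ on $T$ such that $F\cdot H$ is a subring of functions on $T$ containing $1$; this identifies $F\cdot H$ with the restriction of $R(K)$ to $T$, a unital subring in which products expand with non-negative integer coefficients in the distinguished basis. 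Multiplying the elements of $H$ by this $F$ yields the full set of irreducible $K$-characters. \emph{Stage 3.} Recover $K$ itself from the pointed torus $(T,\{e\})$ together with its character table: the Weyl group $W_K$ is identified as the group of basepoint-preserving homeomorphisms of $T$ stabilising the family $\{\chi_V\}$, the root system of $(K,T)$ is encoded in these characters via the Weyl character formula, and a compact connected Lie group is determined up to isomorphism by its maximal torus, Weyl group, and characters. The factorisation \eqref{eq char Delta p} then isolates the noncompact roots $R_n^+$ inside the full root system, whence
\[
\kp\otimes_\R \C = \bigoplus_{\alpha \in R_n^+}(\C_\alpha \oplus \C_{-\alpha})
\]
as a $T$-module; the $K$-invariant real structure yields $\kp$ as a $K$-module, and the semidirect product $K\ltimes \kp$ is determined.

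The principal obstacle is Stage 2: canonically pinning down $\chi_{\Delta_{\kp}}$ from the ring-theoretic data. The key input to exploit is the positivity of tensor-product multiplicities, i.e., the structure constants $n^\gamma_{\alpha\beta}$ defined by $\chi_{V_\alpha}\chi_{V_\beta} = \sum_\gamma n^\gamma_{\alpha\beta}\chi_{V_\gamma}$ lie in $\Z_{\geq 0}$; combined with the requirement that $1 = \chi_\C$ belong to $F\cdot H$ (since $\C \in \hat K$ once $\Delta_{\kp}$ descends), this forces $F$ up to overall sign. Once Stage 2 is settled, Stages 1 and 3 reduce to standard bookkeeping via the Weyl character formula and Lie-theoretic reconstruction.
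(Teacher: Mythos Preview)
Your forward direction and the overall architecture of the reverse direction (recover the character ring, then $K$, then $\kp$) are reasonable and parallel the paper, but the paper pins down $\chi_{\Delta_{\kp}}$ by a different and more robust mechanism: it chooses any $\Z$-basis $\{e_j\}$ of $K_0(C^*_rG)$, observes via Theorem~\ref{thm fixed GK} that there is a function $\psi$ on $T$ with $\lim_{g\to e}\psi(g)\tau_g(e_j)=\pm\dim V_j$, and then recovers $|\chi_{\Delta_{\kp}}|_T|$ as $|\tau_{\,\cdot\,}(e_{j_0})|^{-1}$ for any $j_0$ with limit $\pm 1$. The noncompact roots are then read off from the logarithmic derivative of this function. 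The point is that $|\chi_{\Delta_{\kp}}|$ is what is actually well-defined from the data, and that suffices.

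Your Stage~2 has a genuine gap: the function $F$ is \emph{not} unique up to sign. If $F\cdot H$ is a unital subring of functions on $T$, then $F\cdot H=(F/\chi_{\Delta_{\kp}})\,R(K)|_T$, and closure under multiplication together with $1\in F\cdot H$ force $F/\chi_{\Delta_{\kp}}$ to be a unit of $R(K)$, i.e.\ $\pm\chi_W$ for a one-dimensional $W\in\hat K$. When $K$ is not semisimple---already for $K=\SO(2)$ in $G=\SL(2,\R)$, or $K=S(\U(p)\times \U(q))$---there are many such $W$. Worse, every such choice yields the \emph{same} set $F\cdot H=R(K)|_T$ with the \emph{same} positive basis $\{\chi_V\}_{V\in\hat K}$, because tensoring by a one-dimensional representation merely permutes $\hat K$; so neither the ring structure nor the non-negativity of structure constants separates the candidates. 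Consequently your recovery of $K$ in Stage~3 still goes through (the ring $R(K)|_T$ with its positive basis is unambiguous), but your recovery of $\kp$ breaks: you invoke the factorisation~\eqref{eq char Delta p} of $F$ to read off $R_n^+$, and $F=\chi_{\Delta_{\kp}}\chi_W$ with $W$ nontrivial does not factor as $\prod_{\alpha}(e^{\alpha/2}-e^{-\alpha/2})$. The paper's passage to $|\chi_{\Delta_{\kp}}|$ is exactly what kills this ambiguity, since $|\chi_W|=1$. (An alternative fix, which you do not supply, is to impose $F(g^{-1})=\pm F(g)$: this holds for $\chi_{\Delta_{\kp}}$ because $\Delta_{\kp}^*\cong\pm\Delta_{\kp}$, and since $K$ is connected the only self-dual one-dimensional $W$ is trivial.)
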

\begin{proof}
Write
\[
K_0(C^*_rG) = \bigoplus_{j \in \Z}\Z
\]
and let $e_j$ be a generator of the $j$th copy of $\Z$. 

Consider the function $\chi_j\colon T \to \R$ given by $\chi_j(g)=\tau_g(e_j)$. By Theorem \ref{thm fixed GK}, there is a function $\psi \in C^{\infty}(T)$, not unique but independent of $j$, and there are uniquely determined integers $d_j$ such that for all $j$, 
\[
\lim_{g\to e} \psi(g)\chi_j(g) = d_j,
\]
where at least one of the integers $d_j$ equals $1$. (Indeed, take $\psi = \chi_{\Delta_{\kp}}|_T$ and $d_j$ plus or minus the dimensions of the irreducible representations of $K$.) By replacing $e_j$ by $-e_j$ where necessary, we can make sure that all integers $d_j$ are positive.

Fix $j_0 \in \Z$ such that $d_{j_0} = 1$. Then, again by Theorem \ref{thm fixed GK},
\[
|\chi_{\Delta_{\kp}}|_T| = |\chi_{j_0}|^{-1}.
\]
And $\overline{\chi_{\Delta_{\kp}}} = -\chi_{\Delta_{\kp}}$, so $\chi_{\Delta_{\kp}}$ is imaginary-valued. Hence
\[
\chi_{\Delta_{\kp}}|_T = \pm  i|\chi_{j_0}|^{-1}.
\] 
We cannot resolve 
the sign ambiguity with the data we have, but we will not need to.

The characters of irreducible representations $V_j$ of $K$ are now determined by 
\[
\chi_{V_j}|_T = \chi_{\Delta_{\kp}}|_T \chi_j = \pm  i|\chi_{j_0}|^{-1} \chi_j,
\]
with the sign chosen such that $\pm  i|\chi_{j_0}|^{-1} \chi_j > 0$ near the identity element.  This determines the representations $V_j$ of $K$, and their tensor products and the underlying vector spaces. By Tannaka duality \cite{Tannaka}, this determines $K$.

To recover $\kp$ as a $K$-representation, set $\psi :=  i|\chi_{j_0}|^{-1}$. Then
\[
\psi = \pm
\chi_{\Delta_{\kp}}|_T = \pm \prod_{\alpha \in R^+_{n}}(e^{\alpha/2}-e^{-\alpha/2}). 
\]
This implies that for all $X,Y \in \kt$,
\[
\ddt \psi(X+tY) =\psi(\exp(X)) \sum_{\alpha \in R^+_{n}} \frac{\langle \alpha, Y \rangle}{2} \coth(\langle \alpha, X\rangle/2).
\]
The term on the right hand side corresponding to $\alpha$ equals the same term with $\alpha$ replaced by $-\alpha$. But otherwise this expression determines the weights $\alpha$ up to signs.  
In this way, we recover the set 
$R_{n}$ of $\kt$-weights of $\kp \otimes \C$ as a complex representation of $T$. And hence $\kp$ as a real representation of $T$, and therefore as a representation of $K$. This determines $K \ltimes \kp$.

Conversely, the Cartan motion group $K \ltimes \kp$ determines its maximal compact subgroup $K$ and the quotient $\kp = (K\ltimes \kp)/K$ as a representation of $K$. And $K$ determines the pair $(T, \{e\})$ up to conjugacy. The $K$-theory group $K_0(C^*_rG)$ is isomorphic to $R(K)$ via Dirac induction. Furthermore, $K$ and $\kp$
 determine the characters $\chi_V$, for $V \in \hat K$ and $\chi_{\Delta_{\kp}}$, and the dimension $\dim(G/K) = \dim(\kp)$. Hence, by Theorem \ref{thm fixed GK}, this determines the maps $\tau_g\colon K_0(C^*_rG) \cong R(K) \to \C$, for $g \in T$.
\end{proof}

\begin{remark}
In Corollary \ref{cor reconstruct}, $T$ may be replaced by a dense subset. Also, one only needs the neighbourhoods of the identity element, not all of its topology. And as stated in the corollary, one does not need the group structure of $T$.
\end{remark}

\begin{remark}
If $G=K$ is compact, then the triple $(K_0(C^*_rG), (T, \{e\}), (\tau_g)_{g\in T})$ determines the ring $R(G)$  of characters of $G$. That in turn determines the tensor products of representations of $G$, and forgetful maps to finite-dimensional complex vector spaces. So in this case, Corollary \ref{cor reconstruct} reduces to Tannaka duality for compact groups \cite{Tannaka} (which was used in the proof of Corollary \ref{cor reconstruct}).
\end{remark}

\begin{remark}
If the representation $\Delta_{\kp}$ of $\tilK$ does not descend to $K$, then we only recover the ring $R_{\Spin}(K)$ in the proof of Corollary \ref{cor reconstruct} and cannot directly apply Tannaka duality.
\end{remark}

\section{Characters} \label{sec lds}

Again, we suppose that the representation $\Delta_{\kp}$ of $\tilK$ descends to $K$. We may need to replace $G$ by a double cover for this assumption to hold. This assumption is now not essential; see Remark \ref{rem char}.

\subsection{Characters and $\tau_g$}

The structure of the $C^*$-algebra $C^*_rG$ and its $K$-theory was described by Wassermann \cite{Wassermann87} and Clare, Crisp and Higson \cite{CCH}. We can use this to relate values of $\tau_g$ on $K$-theory classes to values of characters of representations.

Let $P = MAN < G$ be a cuspidal parabolic and $\sigma$ in the set $\hat M_{\ds}$ of discrete series representations of $M$. Consider the bundle of Hilbert spaces $\cE_{P, \sigma} \to \hat A$ whose fibre at $\nu \in \hat A$ is $\Ind_P^G(\sigma \otimes \nu \otimes 1_N)$. (This can be topologised by viewing it as a trivial bundle in the compact picture of induced representations.) Let $\Ind_P^G(\sigma)$ be the Hilbert $C_0(\hat A)$-module of continuous sections of $\cE_{P, \sigma}$ vanishing at infinity. The group
\[
W_{\sigma} := \{w \in N_K(\ka)/Z_K(\ka); w\sigma = \sigma\}
\]
acts on $\cK(\Ind_P^G(\sigma))$ via Knapp--Stein intertwiners; see Theorem 6.1 in \cite{CCH}. Let $\cK(\Ind_P^G(\sigma))^{W_{\sigma}}$ be the fixed point algebra of this action. Then
\[
C^*_rG \cong \bigoplus_{P, \sigma} \cK(\Ind_P^G(\sigma))^{W_{\sigma}}
\]
where the sum runs over a set of cuspidal parabolics $P=MAN$ and $\sigma \in \hat M_{\ds}$. This is Theorem 6.8 in \cite{CCH}. See also Theorem 8 in \cite{Wassermann87}.

Now let $P$ and $\sigma$ be such that
\[
K_0(\cK(\Ind_P^G(\sigma))^{W_{\sigma}})
\]
is nonzero, hence infinite cyclic. (This is equivalent to the condition that $W_{\sigma}$ equals the $R$-group $R_{\sigma}$, see Lemma 10 in  \cite{Wassermann87}.)
Let $b(P, \sigma) \in K_0(C^*_rG)$ be the generator of this summand of $K_0(C^*_rG)$ in the image under Dirac induction of the $\Z_{\geq 1}$-span of $\hat K$ inside $R(K)$. 

Let $\eta \in i\kt_M^*$ be the Harish-Chandra parameter of $\sigma$, and $\tilde \eta \in i\kt^*$ its extension by zero on the orthogonal complement of $\kt_M$ in $\kt$. 
For any positive root system $\tilde R^+$ of $(\kg^{\C}, \kt^{\C})$ for which $\tilde \eta$ is dominant, let $\pi^G(\tilde \eta, \tilde R^+)$ be the corresponding (limit of) discrete series representation of $G$.
We need the following version of Schmid's character identities. This is  Lemma 12 in \cite{Wassermann87} in the equal rank case, but with information included about 
 the infinitesimal characters of the limits of discrete series representations that occur.
\begin{proposition}\label{prop Schmid id}
There are $2^{\dim(A)}$ choices of positive roots $R^+_1, \ldots, R^+_{2^{\dim(A)}} \subset R$, obtained from $R^+$ by the application of all combinations of $\dim(A)$ commuting reflections in simple noncompact roots, 
such that
\[
\Ind_P^G(\sigma \otimes 1_A \otimes 1_N) = \bigoplus_{j=1}^{2^{\dim(A)}} \pi^G(\tilde \eta, R^+_j).
\]
\end{proposition}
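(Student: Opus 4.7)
The plan is to treat Proposition \ref{prop Schmid id} as a refinement of Wassermann's Lemma 12 in \cite{Wassermann87}, which already supplies the decomposition of $\Ind_P^G(\sigma \otimes 1_A \otimes 1_N)$ into $2^{\dim(A)}$ irreducible summands that are limits of discrete series of $G$. The only additional content is to identify each summand in Schmid's parametrization, i.e.\ to specify which positive root systems $R^+_j$ arise.

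The first step is to note that parabolic induction preserves infinitesimal character, so every constituent of $\Ind_P^G(\sigma \otimes 1_A \otimes 1_N)$ has infinitesimal character corresponding to $\tilde\eta$. Since by Wassermann's lemma each constituent is a limit of discrete series, it must be of the form $\pi^G(\tilde\eta, Q^+)$ for some positive root system $Q^+ \subset R$ making $\tilde\eta$ weakly dominant. The candidates for $Q^+$ are constrained by the singular set $S := \{\alpha \in R : \langle \alpha, \tilde\eta\rangle = 0\}$: since $\eta$ is regular for the roots of $(\km^{\C}, \kt_M^{\C})$, $S$ consists exactly of the roots of $R$ that vanish on $\kt_M$, and via the Cayley transform identifying the orthogonal complement of $\kt_M$ in $\kt$ with $i\ka$, these roots are supported on the $\ka$-direction and are therefore noncompact. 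Standard structure theory for $\theta$-stable cuspidal parabolics shows that $S$ is a root subsystem of rank $\dim(A)$ whose simple roots can be chosen pairwise orthogonal, and that after an element of the compact Weyl group $W_K$ (which fixes $\tilde\eta$) these become simple noncompact roots of $R^+$ itself.

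The second step is to fix such a set of $\dim(A)$ pairwise orthogonal simple noncompact roots of $R^+$ and form the $2^{\dim(A)}$ positive systems $R^+_1, \ldots, R^+_{2^{\dim(A)}}$ obtained by applying all $2^{\dim(A)}$ subsets of the corresponding commuting reflections. These positive systems all make $\tilde\eta$ weakly dominant, so the $\pi^G(\tilde\eta, R^+_j)$ are defined; moreover, for distinct $j$ they are pairwise inequivalent, since reflecting across a simple noncompact wall on which $\tilde\eta$ vanishes genuinely changes the limit of discrete series (not just the parameter). Since Wassermann's lemma predicts exactly $2^{\dim(A)}$ constituents, this exhausts them.

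To match the two decompositions summand by summand, I would iterate Schmid's basic character identity one singular simple noncompact root $\alpha$ at a time: at each step it rewrites $\pi^G(\tilde\eta, Q^+) + \pi^G(\tilde\eta, s_\alpha Q^+)$ as a representation induced from a cuspidal parabolic with $\dim(A)$ reduced by one, and after $\dim(A)$ iterations one reaches the induced representation of the proposition. The main obstacle is the bookkeeping for the Cayley transform identifying $\kt_M \oplus i\ka \subset \kt$ and for verifying that each of the $\dim(A)$ singular noncompact roots produced at successive stages really is simple for the relevant positive system; both are classical but notation-heavy, and I would appeal to the treatment in Knapp's \emph{Representation Theory of Semisimple Groups} for the detailed verification.
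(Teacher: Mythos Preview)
Your argument is essentially sound, but it is far more elaborate than what the paper does: the paper's entire proof is a one-line citation of Theorem~13.3 in Knapp--Zuckerman \cite{KZ2} (applied with the maximal parabolic $G$ of the equal-rank group $G$), which already packages the decomposition together with the explicit description of the $R^+_j$ as the $2^{\dim(A)}$ positive systems obtained by flipping across the $\dim(A)$ orthogonal simple noncompact walls through $\tilde\eta$. What you outline---starting from Wassermann's count of $2^{\dim(A)}$ limit-of-discrete-series constituents, pinning down the infinitesimal character as $\tilde\eta$, analysing the singular subsystem $S$, and then iterating Schmid's two-term character identity along successive Cayley transforms---is precisely the skeleton of the Knapp--Zuckerman proof itself, so you are effectively re-deriving the cited theorem rather than invoking it. That is a legitimate and more self-contained route, at the cost of the bookkeeping you acknowledge. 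One small correction: your parenthetical ``an element of the compact Weyl group $W_K$ (which fixes $\tilde\eta$)'' is not right as stated---$W_K$ does not fix $\tilde\eta$ in general, and indeed the adjustment you need is not by the stabiliser of $\tilde\eta$ but by an element of $W_K$ taking $\tilde\eta$ into the closure of the chamber for $R^+$, after which the simple roots of $R^+$ orthogonal to $\tilde\eta$ are the singular noncompact roots you want. With that fixed, your sketch goes through.
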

\begin{proof}
This is a special case of Theorem 13.3 in \cite{KZ2} for the maximal parabolic $G$ in the equal-rank group $G$.
\end{proof}

As before, let $\rho_c$ be half the sum of the compact positive roots. 
By Lemma 15(i) in \cite{Wassermann87}, the element $\tilde \eta - \rho_c$ is dominant for $K$. It is integral because $\Delta_{\kp}$ descends to $K$; this implies that $\rho_n$ and hence $\tilde \eta - \rho + \rho_n$ is integral.
\begin{proposition}[Wassermann] \label{prop DInd}
Let $V_{\tilde \eta - \rho_c} \in \hat K$ have highest weight $\tilde \eta - \rho_c$. Then
\[
\DInd_K^G[V_{\tilde \eta - \rho_c}] = b(P, \sigma).
\]
\end{proposition}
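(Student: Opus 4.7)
The plan is to invoke Corollary \ref{cor sep pts}: since $\tau_g$ separates points on $K_0(C^*_rG)$, it suffices to verify that
\[
\tau_g\bigl(\DInd_K^G[V_{\tilde\eta - \rho_c}]\bigr) = \tau_g(b(P,\sigma))
\]
for $g$ in a dense subset of $T$.

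For the left-hand side, I would apply Theorem \ref{thm fixed GK}(a) directly, obtaining
\[
\tau_g\bigl(\DInd_K^G[V_{\tilde\eta - \rho_c}]\bigr) = (-1)^{\dim(G/K)/2}\frac{\sum_{w\in W_K}\varepsilon(w)e^{w\tilde\eta}(g)}{\prod_{\alpha\in R^+}(e^{\alpha/2}-e^{-\alpha/2})(g)}
\]
for almost all $g\in T$, since $V_{\tilde\eta-\rho_c}$ has highest weight $\lambda = \tilde\eta - \rho_c$ and hence $\lambda + \rho_c = \tilde\eta$.

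For the right-hand side, I would use the Wassermann--Clare--Crisp--Higson decomposition $C^*_rG \cong \bigoplus_{P,\sigma}\cK(\Ind_P^G(\sigma))^{W_\sigma}$, under which $b(P,\sigma)$ is represented by a rank-one projection in its summand. Standard Plancherel-type computations (essentially those of \cite{Wassermann87, CCH}) relate $\tau_g(b(P,\sigma))$ to the distributional character $\Theta_{\Ind_P^G(\sigma\otimes 1_A\otimes 1_N)}(g)$, up to a normalization involving $|W_\sigma|$. By Proposition \ref{prop Schmid id} this character decomposes into $2^{\dim(A)}$ limit-of-discrete-series characters $\Theta_{\pi^G(\tilde\eta, R^+_j)}$, each given on $T^{\reg}$ by Harish-Chandra's formula (Theorem 16 in \cite{HC66}) with the same Weyl numerator $\sum_{w\in W_K}\varepsilon(w)e^{w\tilde\eta}$ as appears above; the denominators $\prod_{\alpha\in R^+_j}(e^{\alpha/2}-e^{-\alpha/2})$ differ from $\prod_{\alpha\in R^+}(e^{\alpha/2}-e^{-\alpha/2})$ only by signs depending on the number of simple noncompact roots that are flipped, and those flipped roots annihilate $\tilde\eta$. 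Summing the $2^{\dim(A)}$ contributions and combining with the $1/|W_\sigma|$ factor should then recover exactly the expression from Theorem \ref{thm fixed GK}(a).

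The main obstacle is the normalization bookkeeping in the second step: one must verify that the rank-one projection representing $b(P,\sigma)$, under the orbital integral $\tau_g$, produces the expected character sum with the correct overall constant $1/|W_\sigma|$. This requires careful tracking of the Plancherel density and the $W_\sigma$-action via Knapp--Stein intertwiners, and is essentially Wassermann's original argument in \cite{Wassermann87}. Once this identification is in place, Corollary \ref{cor sep pts} immediately yields $\DInd_K^G[V_{\tilde\eta - \rho_c}] = b(P,\sigma)$.
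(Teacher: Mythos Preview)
Your approach has a genuine gap in the computation of $\tau_g(b(P,\sigma))$. You claim that $\tau_g(b(P,\sigma))$ is related to the distributional character $\Theta_{\Ind_P^G(\sigma\otimes 1_A\otimes 1_N)}(g)$ up to a factor $1/|W_\sigma|$, and that summing the $2^{\dim(A)}$ limit-of-discrete-series characters and dividing by $|W_\sigma|$ recovers the Weyl-type expression. But this is precisely where the argument breaks: as the paper remarks just before Corollary~\ref{cor char}, the character of $\Ind_P^G(\sigma\otimes 1_A\otimes 1_N)$ \emph{vanishes identically on $T$} whenever this representation is reducible. The characters $\Theta_{\pi^G(\tilde\eta,R_j^+)}$ on $T^{\reg}$ share the same numerator but their Weyl denominators differ by signs (one sign for each simple noncompact root that is flipped), and these signs are genuine; the fact that the flipped roots annihilate $\tilde\eta$ affects the \emph{numerator} (making it $W_K$-equivariant but singular), not the denominator. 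Hence the $2^{\dim(A)}$ terms cancel in pairs and the sum is zero, not a nonzero multiple of the expression from Theorem~\ref{thm fixed GK}(a). No normalisation by $|W_\sigma|$ can repair this.

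The deeper issue is that $b(P,\sigma)$ is not represented by a rank-one projection onto the full induced representation; it is a generator of $K_0$ of the $W_\sigma$-fixed-point algebra, and identifying which projection represents it (equivalently, which single irreducible summand of $\Ind_P^G(\sigma\otimes 1_A\otimes 1_N)$ it corresponds to) is exactly the content of Wassermann's argument that the paper cites. In the paper's logic, one first establishes Proposition~\ref{prop DInd} by Wassermann's methods and only \emph{then} deduces the value of $\tau_g(b(P,\sigma))$ in Corollary~\ref{cor char}; your proposal reverses this order, but the independent computation of $\tau_g(b(P,\sigma))$ that you would need is not available without essentially redoing Wassermann's identification. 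So the route through Corollary~\ref{cor sep pts}, while structurally appealing, does not bypass the work the paper is citing.
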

\begin{proof}
See the last page of \cite{Wassermann87}. This uses Proposition \ref{prop Schmid id}.
\end{proof}

Proposition \ref{prop Schmid id} and Harish-Chandra's character formula for (limits of) discrete series representations imply that the character of the representation $\Ind_P^G(\sigma \otimes 1_A \otimes 1_N)$ naturally associated to the $K$-theory generator $b(P, \sigma)$ is zero on $T$, if this representation is reducible. (See Subsection \ref{sec SL2} for an example.) Therefore, it is a useful property of the map $\tau_g$ that it maps $b(P, \sigma)$ to the possibly nonzero value of an irreducible summand of that representation.
\begin{corollary}\label{cor char}
For almost all  $g \in T$,
$\tau_g(b(P, \sigma))$
equals the value at $g$ of the character of one of the irreducible summands of $\Ind_P^G(\sigma \otimes 1_A \otimes 1_N)$. The values at $g$ of the characters of these summands at $g$ are all equal up to a sign.
\end{corollary}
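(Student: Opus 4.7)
The plan is to combine Propositions \ref{prop DInd} and \ref{prop Schmid id} with the explicit formula for $\tau_g$ from Theorem \ref{thm fixed GK}(a) and Harish-Chandra's character formula for (limits of) discrete series. First I would substitute $b(P,\sigma) = \DInd_K^G[V_{\tilde\eta - \rho_c}]$ from Proposition \ref{prop DInd} and apply Theorem \ref{thm fixed GK}(a) with highest weight $\lambda = \tilde\eta - \rho_c$, so that $\lambda + \rho_c = \tilde\eta$. This yields, for almost all $g \in T$,
\[
\tau_g(b(P,\sigma)) = (-1)^{\dim(G/K)/2}\, \frac{\sum_{w \in W_K}\varepsilon(w)\, e^{w\tilde\eta}}{\prod_{\alpha \in R^+}(e^{\alpha/2}-e^{-\alpha/2})}(g).
\]
This expression is precisely Harish-Chandra's character formula, in the form derived from Theorem \ref{thm fixed pt} in Corollary 2.6 of \cite{HW2}, for the (limit of) discrete series representation $\pi^G(\tilde\eta, R^+)$ restricted to $T^{\reg}$. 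By Proposition \ref{prop Schmid id}, the positive system $R^+$ itself arises as one of the $R_j^+$ (the empty combination of reflections), so $\pi^G(\tilde\eta, R^+)$ appears as one of the irreducible summands of $\Ind_P^G(\sigma \otimes 1_A \otimes 1_N)$. This establishes the first assertion.

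For the second assertion, I would observe that each summand $\pi^G(\tilde\eta, R_j^+)$ has character on $T^{\reg}$ given by the same formula, with $R^+$ replaced by $R_j^+$. The numerator $\sum_{w \in W_K}\varepsilon(w) e^{w\tilde\eta}$ depends only on $W_K$ and $\tilde\eta$, hence is independent of $j$. The denominator becomes $\prod_{\alpha \in R_j^+}(e^{\alpha/2}-e^{-\alpha/2})$, and a simple reflection $s_\alpha$ in a noncompact root sends $R^+$ to $(R^+\setminus\{\alpha\}) \cup \{-\alpha\}$ while permuting the remaining positive roots, so it multiplies this denominator by $-1$. Applying $k_j$ commuting such reflections therefore multiplies the denominator by $(-1)^{k_j}$, so the $2^{\dim(A)}$ summand characters at $g$ differ from one another only by signs, which proves the second assertion.

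The main obstacle is the bookkeeping of signs: to deduce the first assertion as an equality rather than an equality up to sign, one must check that the overall sign $(-1)^{\dim(G/K)/2}$ produced by Theorem \ref{thm fixed GK}(a) matches the sign in Harish-Chandra's formula for $\pi^G(\tilde\eta, R^+)$. Fortunately this alignment is already built into the derivation of the character formula from the fixed-point theorem in Corollary 2.6 of \cite{HW2}, so no new sign computation is needed beyond invoking that corollary with the choice $R^+$ fixed in Section 3.
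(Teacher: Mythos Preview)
Your proposal is correct and follows essentially the same route as the paper: substitute $b(P,\sigma)=\DInd_K^G[V_{\tilde\eta-\rho_c}]$ via Proposition~\ref{prop DInd}, apply Theorem~\ref{thm fixed GK}(a), identify the result with Harish-Chandra's character formula for $\pi^G(\tilde\eta,R^+)$, and invoke Proposition~\ref{prop Schmid id}. Your treatment of the second assertion is slightly more explicit than the paper's (you track how each simple noncompact reflection flips the sign of the Weyl denominator while fixing the $W_K$-numerator), whereas the paper simply appeals to the form of Harish-Chandra's formula; both arguments are valid and amount to the same observation.
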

\begin{proof}
Proposition \ref{prop DInd} and Theorem \ref{thm fixed GK} imply that
\[
\tau_g(b(P, \sigma)) = \tau_g(\DInd_K^G[V_{\tilde \eta - \rho_c}] )= 
(-1)^{\dim(G/K)/2}\frac{\sum_{w \in W_K} \varepsilon(w)e^{w\tilde \eta}}{\prod_{\alpha \in R^+} (e^{\alpha/2}-e^{-\alpha/2})}(g).
\]
By Harish-Chandra's character formula (extended coherently to the limits of discrete series), the right hand side is the value at $g$ of the character of $\pi^G(\tilde \eta, R^+)$. That formula also shows that on $T$, the character of $\pi^G(\tilde \eta, R^+)$ equals the character of $\pi^G(\tilde \eta, R_j^+)$ modulo a sign, for $j=1, \ldots, 2^{\dim(A)}$. Hence the claim follows from Proposition \ref{prop Schmid id}.
\end{proof}

\begin{remark} \label{rem char}
If the representation $\Delta_{\kp}$ does not descend to $K$, then the analogue of Corollary \ref{cor char} relates $\tau_g(b(P, \sigma))$ to characters of the corresponding representations of a double cover of $G$.
\end{remark}

\subsection{Example: non-spherical principal series and limits of discrete series of $\SL(2,\R)$}\label{sec SL2}

Consider the case where $G = \SL(2,\R)$, $K = T = \SO(2)$, and $P = MAN < \SL(2,\R)$ is the minimal parabolic of upper triangular matrices, where $M = \{\pm I\}$. Then $\hat M_{\ds} = \{\sigma_+, \sigma_-\}$, where $\sigma_+$ is the trivial representation of $M$ in $\C$ and $\sigma_-$ is the nontrivial one. Now we have Morita equivalences
\[
\begin{split}
\cK(\Ind_P^G(\sigma_+))^{W_{\sigma_+}}&\sim C_0([0,\infty));\\
\cK(\Ind_P^G(\sigma_-))^{W_{\sigma_-}}&\sim C_0(\R)\rtimes \Z_2.
\end{split}
\]
See Example 6.11 in \cite{CCH}.
So the pair $(P, \sigma_+)$ does not contribute to $K_0(C^*_r(\SL(2, \R)))$, whereas $(P, \sigma_-)$ contributes a summand $\Z$, generated by
\[
b(P, \sigma_-) = \DInd_K^G[\C_0].
\]

Let $\alpha \in i\kt^*$ be the root mapping $\begin{pmatrix} {0} & {-1} \\{1} &{0} \end{pmatrix} $ to $2i$. Set $R^+ := \{\alpha\}$. Let 
\[
g = \begin{pmatrix} \cos \varphi &-\sin \varphi \\ \sin \varphi  &\cos \varphi  \end{pmatrix} \in T,
\]
where $\varphi \in \R\setminus 2\pi \Q$. Theorem \ref{thm fixed GK} now yields
\[
\tau_g(b(P, \sigma_-)) = \frac{1}{2 i \sin \varphi}.
\]
This is the value at $g$ of the character of the limit of discrete series representation $\pi^G(0, R^+)$, and minus the value at $g$ of the character of the limit of discrete series representation $\pi^G(0, -R^+)$. The direct sum of these two representations is the non-spherical principal series representation $\Ind_P^G(\sigma_- \otimes 1_A \otimes 1_N)$. The character of that representation is zero at $g$.

Some authors, including the authors of this paper, have wondered if the $K$-theory generator $b(P, \sigma_-)$ can be detected by suitable maps out of $K_0(C^*_r(\SL(2,\R)))$, and if representation theoretic information can be recovered from it. This example shows that the answer to both questions is yes.

\section{Stable orbital integrals and continuity at the group identity}

This section is independent of the rest of this paper. In particular, it does not depend on Theorem \ref{thm fixed GK}.

It follows from Theorem \ref{thm fixed GK} that, for a fixed $x \in K_0(C^*_rG)$, the function
\[
g\mapsto \tau_g(x)
\]
on the set of semisimple elements $g$ of $G$, is not continuous if $G$ is noncompact. In particular, it is not continuous at the identity element.
 Theorem \ref{thm fixed GK} does imply that this function is continuous almost everywhere. Already in the compact case, it is a nontrivial question if the right hand side of the fixed point formula \eqref{eq fixed pt} depends continuously on $g$, for example as $g \to e$ (as pointed out in Section 8.1 in \cite{BGV}). It turns out that a version of $\tau_g$ involving \emph{stable orbital integrals} has better continuity properties near the identity element. (This comes at the cost of mapping more elements to zero, however. See Section \ref{sec SL2}, where the stable orbital integral of the class in $K_0(C^*_r\SL(2\,R))$ associated to the limits of discrete series is shown to be zero.)

\subsection{Continuity at $e$}

Let $G_{\C}$ be a complex semisimple Lie group, and $G<G_{\C}$ a real form of $G_{\C}$. Let $g$ be a semisimple element of $G$. 
\begin{definition}
The \emph{stable conjugacy class} of $g$ in $G$ is 
\[
(g)_s:=\{hgh^{-1}\in G : h\in G_{\C}\};
\]
the intersection of the conjugacy class $(g)_{G_{\C}}$ of $g$ in $G_{\C}$ with $G$. 

For every $f$ in the Harish-Chandra Schwartz algebra $\cC(G)$,
the \emph{stable orbital integral} of $f$ with respect to $g$ is
\[
\tau_g^s(f):=\sum_{g'}\tau_{g'}(f)=\sum_{g'}\int_{G/Z_G(g')}f(hg'h^{-1})dh(Z_G(g')),
\] 
where the sum is over representatives $g'$ of $G$-conjugacy classes in $(g)_s$, i.e., 
$(g)_s=\sqcup_{g'}(g').$
\end{definition}

Stable conjugacy classes are relevant to the notion of an $L$-packet of representations and Shelstad's character identities. See \cite{Shelstad79}.

The map $\tau_g^s\colon K_0(C^*_rG) = K_0(\cC(G)) \to \C$ induced by $\tau_g^s$, has better continuity properties in $g$ than $\tau_g$.
Let $S \subset G$ be the set of elements $g$ for which Theorem \ref{thm fixed pt} holds, see Remark \ref{rem FGOI}. Then $G\setminus S$ has measure zero, so in particular $S$ is dense.
\begin{theorem} \label{thm cts}
For all $x \in K_0(C^*_rG)$,
\[
\lim_{g\to e; g \in S} \tau_g^s(x) = \tau_e(x).
\]
\end{theorem}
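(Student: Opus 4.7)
My plan is to reduce to generators of $K_0(C^*_rG)$ via surjectivity of Dirac induction, so it suffices to prove the theorem for $x = \DInd_K^G[V]$ with $V \in \hat K_{\Spin}$, since $\tau_g^s$ and $\tau_e$ are $\Z$-linear. In the case $\rank(G) \neq \rank(K)$, the fixed-point formula \eqref{eq fixed pt} applied to each $g' \in (g)_s$ together with the Bott-class vanishing of Lemma \ref{lem Dirac Rn} forces $\tau_{g'}(\DInd_K^G[V]) = 0$ for almost all $g' \in T$, so $\tau_g^s(x) \to 0$ as $g \to e$ through $S$; and $\tau_e(x)$ is the $L^2$-index of $D_V$ (Proposition 4.4 in \cite{Wang14}), which vanishes since $G$ has no discrete series, so both sides are zero.

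In the equal-rank case, $T$ is a common compact Cartan of $G$ and $G_{\C}$, and the real Weyl group $N_G(T)/T$ equals $W_K$. Stable conjugacy preserves eigenvalues and hence ellipticity, and all elliptic Cartans are $G$-conjugate to $T$; so for regular $g \in T$, the set $(g)_s \cap T$ is the single orbit $\{wg : w \in W\}$ with $W := N_{G_{\C}}(T_{\C})/T_{\C}$, giving $|W/W_K|$ distinct $G$-conjugacy classes of representatives. Applying Harish-Chandra's character formula to each summand and using the antisymmetry $\prod_{\alpha \in R^+}(e^{\alpha/2}-e^{-\alpha/2})(wg) = \varepsilon(w)\prod_{\alpha \in R^+}(e^{\alpha/2}-e^{-\alpha/2})(g)$, the double sum over $W/W_K$ and $W_K$ collapses (by reindexing $u = w'w^{-1} \in W$) to
\[
\tau_g^s(\DInd_K^G[V]) = (-1)^{\dim(G/K)/2} \frac{\sum_{u \in W} \varepsilon(u)\, e^{u(\lambda+\rho_c)}(g)}{\prod_{\alpha \in R^+}(e^{\alpha/2}-e^{-\alpha/2})(g)}.
\]
By Weyl's character formula, the right-hand side is, up to a sign, the character at $g$ of the irreducible representation of a compact form of $G_{\C}$ with highest weight $\mu := \lambda + \rho_c - \rho$, or is identically zero if $\lambda+\rho_c$ is singular. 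Either way it is analytic on $T$ and extends continuously to $e$, where by the Weyl dimension formula the value is $(-1)^{\dim(G/K)/2}\dim V_\mu$.

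The remaining step is to identify this limit with $\tau_e(\DInd_K^G[V])$, which by Proposition 4.4 in \cite{Wang14} is the $L^2$-index of $D_V$. By the Atiyah--Schmid realization of discrete series as kernels of Dirac operators together with Harish-Chandra's formal-degree formula, this $L^2$-index equals $\pm d_\pi$ for the (limit of) discrete series $\pi = \pi^G(\lambda + \rho_c, R^+)$ when $\lambda+\rho_c$ is regular, and zero otherwise; this coincides with $(-1)^{\dim(G/K)/2}\dim V_\mu$ up to sign and Plancherel normalization. The main obstacle is this last matching — pinning down signs, Plancherel constants, and the singular-parameter case where both sides vanish for slightly different reasons — and, given the announcement that this section is independent of Theorem \ref{thm fixed GK}, re-deriving the per-$g'$ character formula directly from Theorem \ref{thm fixed pt} on $G/K$ rather than citing it.
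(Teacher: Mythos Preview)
Your reduction to generators and the identification $\tau_g^s = \sum_{[w] \in W_G/W_K} \tau_{wgw^{-1}}$ on $T^{\reg}$ (the paper's Lemma~\ref{lem Arthur}) are exactly the paper's starting moves. Your direct computation, collapsing the double sum over $W_G/W_K$ and $W_K$ into a single Weyl numerator over the full Weyl group $W_G$, is correct and lands you where the paper lands: $\tau_g^s(x)$ equals the value at $g$ of the character of a virtual representation of the compact inner form $G_c$, hence is continuous in $g$ with a well-defined limit at $e$. The paper reaches this same point by a more structural route---rewriting generators as indices of Dolbeault--Dirac operators on $G/T$ (Proposition~\ref{prop GT}), then invoking the $K$-theoretic character identity of Theorem~\ref{thm char id} together with Lemma~\ref{lem Lw} to transfer the stable sum on $G$ to a single $\tau_g$ on $G_c$. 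This avoids any reliance on Theorem~\ref{thm fixed GK}, as the section intends, and it also sidesteps the sign and reindexing bookkeeping you would otherwise have to pin down.

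The genuine gap is the one you name: matching the limit with $\tau_e(x)$. Invoking Harish-Chandra's formal-degree formula here is not only heavy but, in the logic of the paper, circular: Corollary~\ref{cor char cts} is precisely the statement that the stable character sum tends to $d_\pi$, and the paper presents it as a \emph{consequence} of Theorem~\ref{thm cts}. The paper closes the gap instead via Lemma~\ref{lem:2}: by Connes--Moscovici's $L^2$-index formula, both $\tau_e\bigl(\ind_{G_c}(\bar\partial_{L^{G_c}_\nu}+\bar\partial_{L^{G_c}_\nu}^*)\bigr)$ and $\tau_e\bigl(\ind_G(\bar\partial_{L^G_\nu}+\bar\partial_{L^G_\nu}^*)\bigr)$ are given by the same cohomological expression in $H^*(\kg,T,\R)$, because $\kg/\kt$ and $\kg_c/\kt$ are isomorphic as complex $T$-representations (each is the sum of the positive root spaces). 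This matches the two sides without computing either one, handles regular and singular $\lambda+\rho_c$ uniformly, and makes the sign and Plancherel-normalisation worries you raise disappear.
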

(Note that $\tau_e = \tau_e^s$.)

Let $K<G$ be maximal compact. 
If $\rank(G)\not= \rank(K)$, then Theorem \ref{thm cts} follows from Theorem \ref{thm fixed GK}(b) and the fact that $\tau_e$ is identically zero on $K_0(C^*_rG)$. So assume from now on that $\rank(G) = \rank(K)$.

Theorem \ref{thm cts} implies a continuity property of characters of $L$-packets of discrete series reresentations. 

As before, let $T<K$ be a maximal torus, and
set $W_K := N_K(T)/T$. Let $W_G$ be the Weyl group of the root system of $(\kg^{\C}, \kt^{\C})$. Fix representatives $w \in W_G$ of all classes $[w] \in W_G/W_K$. For any discrete series representation with Harish-Chandra parameter $\lambda$, we denote its global character by $\Theta_{\lambda}$.
\begin{corollary}\label{cor char cts}
Let $\pi$ be a discrete series representation of $G$ with Harish-Chandra parameter  $\lambda\in i\mathfrak t^*$. Then
\[
\lim_{g \to e; g \in T^{\reg}}
\sum_{[w]\in W_G/W_K}\Theta_{w\lambda}(g) = d_{\pi}
\]
where 
 $d_{\pi}$ is the formal degree of $\pi$. 
\end{corollary}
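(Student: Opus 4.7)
The plan is to apply Theorem \ref{thm cts} to the $K$-theory class $[\pi] \in K_0(C^*_rG)$ attached to the discrete series representation $\pi$, and then identify the resulting stable orbital integral with the $L$-packet character sum. Specifically, let $v$ be a unit $K$-finite vector in the representation space of $\pi$; as in the proof of Corollary \ref{cor selberg ds}, the idempotent $d_\pi \bar{m}_{v,v} \in \cC(G)$ defines a class $[\pi] \in K_0(C^*_rG)$ satisfying $\tau_e([\pi]) = d_\pi$. Theorem \ref{thm cts} then gives $\lim_{g \to e;\, g \in S} \tau_g^s([\pi]) = d_\pi$, so it suffices to identify $\tau_g^s([\pi]) = \sum_{[w] \in W_G/W_K} \Theta_{w\lambda}(g)$ for $g \in T^{\reg} \cap S$, a set dense near $e$.

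For the decomposition of the stable conjugacy class, note that since $g \in T^{\reg}$ is regular, $Z_G(g) = T$, and two elements of $T$ are $G_{\C}$-conjugate (resp. $G$-conjugate) iff they are $W_G$-conjugate (resp. $W_K$-conjugate). Moreover every $G$-conjugacy class inside $(g)_s$ meets $T$, since all its elements are regular elliptic with the same characteristic polynomial as $g$. Hence $\{wg : [w] \in W_G/W_K\}$ is a complete set of representatives of the $G$-conjugacy classes in $(g)_s$, so $\tau_g^s([\pi]) = \sum_{[w]} \tau_{wg}([\pi])$. Combining with the character identification $\tau_{g'}([\pi]) = \Theta_\lambda(g')$ on $G^{\reg} \cap S$---which is the content of the fixed point formula in Theorem \ref{thm fixed pt} in the special case reproducing Harish-Chandra's character formula, see Corollary 2.6 of \cite{HW2}---we obtain $\tau_g^s([\pi]) = \sum_{[w]} \Theta_\lambda(wg)$.

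It remains to prove the purely Weyl-theoretic identity
\[
\sum_{[w] \in W_G/W_K} \Theta_\lambda(wg) = \sum_{[w] \in W_G/W_K} \Theta_{w\lambda}(g), \qquad g \in T^{\reg}.
\]
Fix $R^+$ making $\lambda$ dominant, so that $w\lambda$ is $wR^+$-dominant with Weyl denominator $\Delta_{wR^+}(g) = \varepsilon(w)\, \Delta_{R^+}(g)$; similarly $\Delta_{R^+}(wg) = \varepsilon(w)\, \Delta_{R^+}(g)$. Applying Harish-Chandra's character formula and choosing the $w$ to be simultaneous representatives of left and right cosets of $W_K$ in $W_G$, the double sum over $[w] \in W_G/W_K$ and $\sigma \in W_K$ reindexes to a single sum over $\tau \in W_G$. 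Using the agreement of the intrinsic sign on $W_K$ with its restriction from $W_G$, both sides are shown to equal
\[
(-1)^{\dim(G/K)/2}\, \Delta_{R^+}(g)^{-1} \sum_{\tau \in W_G} \varepsilon(\tau)\, e^{\tau\lambda}(g).
\]
Combining this identity with the previous step and Theorem \ref{thm cts} yields the corollary. The main obstacle is the sign bookkeeping in this reindexing, since different coset representatives correspond to different positive systems and the equality of the two sums is not apparent term-by-term; once the joint coset representatives are in place, the rest is a mechanical combination of the stable-class decomposition, the character formula, and Theorem \ref{thm cts}.
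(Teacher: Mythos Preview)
Your proof is correct and follows the same overall strategy as the paper: apply Theorem~\ref{thm cts} to a $K$-theory class attached to $\pi$, decompose the stable orbital integral over the stable conjugacy class (your argument here is exactly Lemma~\ref{lem Arthur}), and identify the result with the $L$-packet character sum.

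The one point of departure is how this last identification is made. The paper works with the Dolbeault index class and invokes Lemma~\ref{lem Lw}, a termwise fixed-point computation showing
\[
\tau_{wgw^{-1}}\bigl(\ind_G(\bar\partial_{L^G_{\lambda-\rho}}+\bar\partial_{L^G_{\lambda-\rho}}^*)\bigr)
=\tau_g\bigl(\ind_G(\bar\partial_{L^G_{w^{-1}(\lambda-\rho)}}+\bar\partial_{L^G_{w^{-1}(\lambda-\rho)}}^*)\bigr),
\]
which after summing over $[w]$ immediately yields $\sum_{[w]}\Theta_{w\lambda}(g)=\tau_g^s(x)$. You instead keep the class $[\pi]$ and prove the summed identity $\sum_{[w]}\Theta_\lambda(wg)=\sum_{[w]}\Theta_{w\lambda}(g)$ directly from Harish-Chandra's character formula via the simultaneous left/right coset reindexing; since $\varepsilon_{W_G}|_{W_K}=\varepsilon_{W_K}$ (both equal the determinant on $\kt$), the signs do line up as you claim. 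Your route avoids the Dolbeault machinery at this step, while the paper's Lemma~\ref{lem Lw} is a slightly stronger termwise statement. One small point to add: to pass from the limit over $g\in S$ furnished by Theorem~\ref{thm cts} to the limit over $g\in T^{\reg}$ in the statement, you should invoke continuity of the character sum on $T^{\reg}$, as the paper does in its final sentence.
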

This corollary will be proved after we prove Theorem \ref{thm cts}.
As a consequence, one can take the limit as $g \to e$ in Harish-Chandra's character formula to obtain an expression for $d_{\pi}$; see e.g.\ page 25 of \cite{Atiyah77}. See also Proposition 50 in \cite{GMW}.


\subsection{A $K$-theoretic character identity}



Let $G_c$ be a compact inner form of $G$, which exists because $\rank(G) = \rank(K)$. Inner forms are defined for example in Chapter 2 of \cite{ABV}, but the only properties we need are that 
$G_c$ is a real form of $G_{\C}$, and $T$ identifies with a Cartan subgroup of $G_c$. So pairs $(G,T)$ and $(G_c, T)$ have the same root system. The positive root system $R^+$ determines a $G$-invariant complex structure on $G_c/T$. For any integral $\nu \in i\kt^*$, consider the holomorphic line bundles
\[
\begin{split}
L^G_{\nu} &:= G\times_T \C_{\nu} \to G/T;\\
L^{G_c}_{\nu} &:= G_c\times_T \C_{\nu} \to G_c/T.
\end{split}
\]
Let $\bar\partial_{L^{G}_{\nu}}$ and $\bar\partial_{L^{G_c}_{\nu}}$ be the Dolbeault operators on $G/T$ and $G_c/T$, respectively, coupled to these line bundles. 

In \cite{HW3}, the authors prove a $K$-theoretic analogue of Shelstad's character identities \cite{Shelstad79}, and deduce Shelstad's character identity in the case of the discrete series.
\begin{theorem} \label{thm char id}
For all integral $\nu \in i\kt^*$ and all $g \in S$,
\[
\tau_g(\ind_{G_c}(\bar\partial_{L^{G_c}_{\nu}}+\bar\partial_{L^{G_c}_{\nu}}^*))=\sum_{[w]\in W_G/W_K} \tau_g(\ind_{G}(\bar\partial_{L^{G}_{w^{-1}\nu}}+\bar\partial_{L^{G}_{w^{-1}\nu}}^*)).
\]
\end{theorem}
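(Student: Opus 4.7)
The plan is to apply the fixed point formula (Theorem \ref{thm fixed pt} and its classical compact-group counterpart) to both sides, then match the local contributions using the coset decomposition $W_G=\bigsqcup_{[w]\in W_G/W_K}W_K\cdot w^{-1}$. For generic $g\in T$ with FGOI, both manifolds admit isolated $g$-fixed point sets: $(G_c/T)^g\cong W_G$ since $N_{G_c}(T)/T=W_G$, and $(G/T)^g\cong W_K$ since $N_G(T)\subset K$ under the equal-rank assumption, giving $N_G(T)/T=W_K$.

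On the left side, $G_c$ is compact, so $\tau_g$ reduces (up to a volume normalization) to character evaluation at $g$, and the $G_c$-equivariant index of $\bar\partial_{L^{G_c}_\nu}+\bar\partial_{L^{G_c}_\nu}^*$ is a virtual $G_c$-representation by Borel-Weil-Bott. The classical Atiyah-Bott holomorphic Lefschetz formula on $G_c/T$ then writes the left side as a sum of standard Lefschetz contributions over $w\in W_G$, which after the usual reshuffling of positive roots yields the Weyl character formula expression $\sum_{w\in W_G}\varepsilon(w) e^{w\nu}(g)\,/\,\prod_{\alpha\in R^+}(1-e^{-\alpha}(g))$.

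On the right side, for each coset representative $[w]$, Theorem \ref{thm fixed pt} applies to the Dolbeault-Dirac operator $\bar\partial_{L^G_{w^{-1}\nu}}+\bar\partial_{L^G_{w^{-1}\nu}}^*$ on the proper cocompact $G$-manifold $G/T$. At each isolated fixed point $uT$ for $u\in W_K$, the Todd factor of a point is trivial, the $\Bigwedge\cN\otimes\C$ denominator reproduces the Atiyah-Bott determinant on the normal bundle, and the Chern character of the symbol contributes the weight $e^{(uw^{-1}\nu)(g)}$: indeed, for $u\in W_K$ the element $u^{-1}gu\in T$ acts on the fiber $\C_{w^{-1}\nu}$ via the character $w^{-1}\nu$, giving $e^{(w^{-1}\nu)(u^{-1}gu)}=e^{(uw^{-1}\nu)(g)}$. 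After the same reshuffling of positive roots, the $[w]$-summand becomes $\sum_{u\in W_K}\varepsilon(u)e^{(uw^{-1}\nu)}(g)\,/\,\prod_{\alpha\in R^+}(1-e^{-\alpha}(g))$, sharing the common denominator of the left side.

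Summing over $[w]\in W_G/W_K$ and applying the bijection $(u,[w])\mapsto uw^{-1}\colon W_K\times W_G/W_K\to W_G$ reassembles the numerators on the right into $\sum_{v\in W_G}\varepsilon(v)e^{v\nu}(g)$, matching the left side. The main obstacle is the careful pointwise matching of local data: verifying that the contributions from Theorem \ref{thm fixed pt} at isolated fixed points of $G/T$ agree with the classical Atiyah-Bott expressions, and that the sign factors $\varepsilon(w)$ arising from Weyl group reshuffling of positive roots (which redistribute across the normal bundle factor and the line bundle weight) recombine consistently under the coset bijection. Once this bookkeeping is handled, the theorem is an immediate consequence of the coset decomposition.
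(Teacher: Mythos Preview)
The paper does not give a self-contained argument here; it simply cites equation~(3.6) of \cite{HW3}, noting that the regularity hypothesis on $\nu$ imposed there is not actually used. Your strategy---applying the holomorphic fixed point formula to both sides and matching local contributions via the coset decomposition $W_G=\bigsqcup_{[w]} W_K\, w^{-1}$---is the natural one, and is essentially what the argument in \cite{HW3} does (compare also the proof of Lemma~\ref{lem Lw} in this paper, which carries out precisely this kind of computation).

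There is, however, a genuine gap in your sign bookkeeping that is not merely cosmetic. With the complex structure on $G/T$ fixed once and for all by $R^+$ (the natural reading of your write-up), the local contribution at $uT\in W_K$ to $\tau_g\bigl(\ind_G(\bar\partial_{L^G_{w^{-1}\nu}}+\bar\partial_{L^G_{w^{-1}\nu}}^*)\bigr)$ has denominator $\prod_{\alpha\in R^+}(1-e^{-u\alpha})$, which after reshuffling produces $\varepsilon(u)$, not $\varepsilon(uw^{-1})$. Under your bijection $v=uw^{-1}$ this yields $\varepsilon(vw)$ rather than $\varepsilon(v)$, so the two sides do \emph{not} match. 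A concrete check with $G=\SL(2,\R)$, $G_c=\SU(2)$, $\nu=0$ already exhibits the failure: all the line bundles $L^G_{w^{-1}\nu}$ are trivial, so with a single fixed complex structure the right-hand side is $|W_G/W_K|$ copies of the same term, whereas the left-hand side is the character of the trivial representation, equal to $1$.

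The resolution, implicit in the statement and made explicit in the proof of Lemma~\ref{lem Lw}, is that $\bar\partial_{L^G_{w^{-1}\nu}}$ is taken with respect to the complex structure on $G/T$ determined by $w^{-1}R^+$. With this convention the holomorphic tangent space at $uT$ is $\bigoplus_{\alpha\in w^{-1}R^+}\C_{u\alpha}=\bigoplus_{\beta\in R^+}\C_{uw^{-1}\beta}$, the denominator becomes $\prod_{\beta\in R^+}(1-e^{-uw^{-1}\beta})$, and reshuffling now produces the correct sign $\varepsilon(uw^{-1})$. After this correction your coset argument goes through verbatim. (There is also a missing $\rho$-shift in your displayed numerators---the Weyl-type expression should carry $e^{w(\nu+\rho)}$ over the Weyl denominator---but that is a minor slip compared to the complex-structure issue.)
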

\begin{proof}
This is (3.6) in \cite{HW3}. There, $\nu$ is regular but that property is not used in the proof of the above equality.
\end{proof}

\subsection{Dolbeault operators}

We will use some properties of the Dolbeault--Dirac operators in Theorem \ref{thm char id} to deduce Theorem \ref{thm cts}. 

First of all,  every element of $K_0(C^*_rG)$ is the index of a Dolbeault--Dirac operator on $G/T$.
Indeed, 
let $V \in \hat K_{\Spin}$, and let $\lambda \in i\kt^*$ be its highest weight with respect to the positive compact roots chosen earlier. Then $\lambda - \rho_n$ is a weight of $\Delta_{\kp} \otimes V$, so it is integral for $T$.
Consider the holomorphic, $G$-equivariant line bundle
\[
L^G_{\lambda- \rho_n}:= G\times_T \C_{\lambda - \rho_n} \to G/T.
\]
Let $\bar \partial_{L^G_{\lambda-\rho_n}}$ be the Dolbeault operator on $G/T$ coupled to $L^G_{\lambda - \rho_n}$.
\begin{proposition} \label{prop GT}
We have
\[
\DInd_K^G[V_{\lambda}] = (-1)^{\dim(G/K)} \ind_G(\bar \partial_{L^G_{\lambda-\rho_n}} + \bar \partial_{L^G_{\lambda-\rho_n}}^*).
\]
\end{proposition}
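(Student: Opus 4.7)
The plan is to prove Proposition \ref{prop GT} by factoring Dirac induction through the intermediate compact manifold $G/T$, using the fibration $G/T \to G/K$ with fiber $K/T$ and identifying the Dolbeault--Dirac operator on $G/T$ with a twisted Spin-Dirac operator.

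First, I would rewrite $\bar\partial_{L^G_{\lambda-\rho_n}} + \bar\partial_{L^G_{\lambda-\rho_n}}^*$ as a Spin-Dirac operator on $G/T$. The complex structure on $G/T$ determined by $R^+ = R_c^+ \cup R_n^+$ has holomorphic tangent space at $eT$ equal to $\bigoplus_{\alpha \in R^+}(\kg^{\C})^\alpha$, so the character of its top exterior power (as a representation of $\tilT$) is $\C_{2\rho}$ with $\rho = \rho_c + \rho_n$. The square root of the canonical line bundle of $G/T$ therefore corresponds to the character $-\rho$, and the standard identification of the Dolbeault operator with the canonical $\Spinc$-Dirac operator yields, up to a nonzero scalar that does not affect the index,
\[
\bar\partial_{L^G_{\lambda-\rho_n}} + \bar\partial_{L^G_{\lambda-\rho_n}}^* = D^{\Spin}_{G/T}\bigl(L^G_{\lambda - \rho_n + \rho}\bigr) = D^{\Spin}_{G/T}\bigl(L^G_{\lambda + \rho_c}\bigr),
\]
interpreted on the appropriate double cover when $G/T$ is not literally spin.

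Next, viewing $G/T$ as $G \times_K (K/T)$, I would apply an induction-in-stages argument in the spirit of Corollary \ref{cor induction}. The decomposition $T(G/T) = (G \times_K \kp) \oplus (G \times_K T(K/T))$ splits the spinor bundle and yields the multiplicativity formula
\[
\ind_G\bigl(D^{\Spin}_{G/T}(L^G_{\lambda + \rho_c})\bigr) = \DInd_K^G\Bigl(\ind_K\bigl(D^{\Spin}_{K/T}(L^K_{\lambda + \rho_c})\bigr)\Bigr),
\]
which is the analogue of \eqref{eq Q Ind} for the fibration $G/T \to G/K$. The inner index is then identified via the Spin-Dirac form of the Borel--Weil theorem on the compact flag manifold $K/T$: for the dominant weight $\lambda$, $\ind_K(D^{\Spin}_{K/T}(L^K_{\lambda + \rho_c})) = [V_\lambda]$ in $R(K)$ (the $\rho_c$-shift is exactly what converts $H^0(K/T, L^K_\lambda) = V_\lambda$ into the Spin-Dirac index). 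Combining these steps gives the proposition; the sign $(-1)^{\dim(G/K)}$ on the right-hand side equals $+1$ since $G/K$ is even-dimensional throughout the paper.

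The main obstacle will be the careful bookkeeping of $\rho$-shifts and of $\Z_2$-gradings. One must verify that the shifts by $\rho_n$, $\rho_c$ and $\rho$ conspire to make the various line bundles and spinor bundles descend correctly from $\tilT$ to $T$ (and that passing to a double cover when $G/T$ is not spin does not introduce extra complications), and one must compare the Dolbeault grading on $\bigwedge^{0,*}T^*(G/T)$ with the Spin grading on $\Delta_{\kp} \otimes \Delta_{\kk/\kt} \cong \Delta_{\kg/\kt}$; this comparison is what ultimately produces the sign appearing on the right-hand side of the stated formula.
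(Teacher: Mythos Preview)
The paper does not give a self-contained proof of this proposition: it simply cites Section~5 of \cite{HW}, remarking that the regularity hypothesis on $\lambda+\rho_c$ made there is not actually needed. Your proposal, by contrast, sketches the argument that presumably lies behind that citation: identify the Dolbeault--Dirac operator on $G/T$ with a Spin-Dirac operator via the $\rho$-shift, factor through the fibration $G/T \cong G\times_K(K/T) \to G/K$ using the induction formula \eqref{eq Q Ind}, and evaluate the fibre index on $K/T$ by Borel--Weil. This is the standard route and is almost certainly what the cited reference does, so your approach and the paper's (deferred) proof coincide in substance.

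Two small remarks. First, your observation that $(-1)^{\dim(G/K)}=+1$ is correct, since $\dim(G/K)$ is assumed even throughout; the sign as printed is therefore vacuous (and may well be a misprint for $(-1)^{\dim(G/K)/2}$, which is the sign appearing elsewhere in the paper). Second, when you invoke \eqref{eq Q Ind} you are using it in its Spin rather than $\Spinc$ form; this is harmless, but be sure that the Spin structure on $G/T$ you obtain from $R^+$ really restricts along $K/T\hookrightarrow G/T$ to the one coming from $R_c^+$ tensored with $\Delta_{\kp}$, since that compatibility is exactly what makes the fibrewise index land in $R_{\Spin}(K)$ rather than merely in $R(\tilK)$. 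You flag this correctly as the place where the bookkeeping must be done carefully.
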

\begin{proof}
This is proved in Section 5 of \cite{HW} in the case where $\lambda+ \rho_c$ is regular for $G$, but that assumption is not necessary for the arguments.
\end{proof}

\begin{lemma}\label{lem Lw}
We have for all $w \in W_G$ and all $g \in S$,
\[
\tau_{wgw^{-1}}(\ind_G(\bar\partial_{L^G_{\lambda-\rho}}+\bar\partial_{L^G_{\lambda-\rho}}^*))
=
\tau_g(\ind_{G}(\bar\partial_{L^{G}_{w^{-1}(\lambda-\rho)}}+\bar\partial_{L^{G}_{w^{-1}(\lambda-\rho)}}^*)).
\]
\end{lemma}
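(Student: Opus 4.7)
My plan is to compute both sides using the fixed-point formula (Theorem \ref{thm fixed pt}) applied to the proper cocompact $G$-manifold $G/T$. Since $g$ and $wgw^{-1}$ both lie in $T\cap S$ and are regular, the fixed-point sets $(G/T)^g$ and $(G/T)^{wgw^{-1}}$ both coincide with $N_G(T)/T = N_K(T)/T = W_K$. Unwinding the Chern-character quotient of Theorem \ref{thm fixed pt} at a fixed point $w_K T$, using the $R^+$-holomorphic structure on $G/T$, yields the Atiyah--Bott contribution $e^{w_K\nu}(\cdot)/\prod_{\alpha \in R^+}(1 - e^{-w_K\alpha}(\cdot))$ for the Dolbeault--Dirac operator coupled to $L^G_\nu$, evaluated at the acting element.

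For the LHS (with $\nu = \lambda - \rho$ and acting element $wgw^{-1}$), I would use the identity $e^\mu(wgw^{-1}) = e^{w^{-1}\mu}(g)$ to push the $w$-conjugation through each summand, giving $\sum_{w_K \in W_K} e^{w^{-1}w_K(\lambda - \rho)}(g)/\prod_\alpha(1 - e^{-w^{-1}w_K\alpha}(g))$. For the RHS, the Atiyah--Bott sum is $\sum_{w_K \in W_K} e^{w_K w^{-1}(\lambda - \rho)}(g)/\prod_\alpha(1 - e^{-w_K\alpha}(g))$. Multiplying numerator and denominator of each summand by $e^{u\rho}$, where $u$ indexes the summand, and invoking the Weyl-alternation identity $e^{u\rho}\prod_\alpha(1 - e^{-u\alpha}) = \sgn(u)\,\Delta$ with $\Delta := \prod_{\alpha \in R^+}(e^{\alpha/2} - e^{-\alpha/2})$, converts both sides into $\Delta(g)^{-1}$ times an alternating exponential sum indexed by a $W_K$-coset in $W_G$. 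A suitable change of summation variable then identifies the two.

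The main obstacle is the bookkeeping of left versus right $W_K$-cosets in $W_G$ that naturally appear on the two sides (respectively $w^{-1}W_K$ and $W_K w^{-1}$), together with the sign $\sgn(w)$ coming from the equality $\Delta(wgw^{-1}) = \sgn(w)\,\Delta(g)$; these must be checked to cancel exactly against the re-indexing. An alternative route that bypasses this combinatorics is to use Proposition \ref{prop GT} to rewrite each Dolbeault--Dirac index as a Dirac induction and then apply Theorem \ref{thm fixed GK} to both sides; the claim then reduces to the $W_G$-covariance of the Harish-Chandra numerator $\sum_{w_K}\sgn(w_K)\,e^{w_K\lambda}$ under the joint substitution $(g,\lambda) \mapsto (wgw^{-1}, w\lambda)$, which in turn follows from the Weyl-alternation of $\Delta$ and the stability of the alternating sum under the $W_G$-action.
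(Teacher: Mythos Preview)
Your approach via the fixed-point formula on $G/T$ is the same as the paper's: the paper also writes down the Atiyah--Bott sum \eqref{eq fixed Dolb}, pushes the $w$-conjugation through the LHS using $e^{\mu}(wgw^{-1})=e^{w^{-1}\mu}(g)$, and then reindexes.

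The gap is in your treatment of the RHS. You compute it with the $R^+$-complex structure on $G/T$, obtaining the denominator $\prod_{\alpha\in R^+}(1-e^{-w_K\alpha}(g))$. After your Weyl-denominator trick this gives
\[
\text{RHS}=\Delta(g)^{-1}\sum_{w_K\in W_K}\sgn(w_K)\,e^{w_K\bigl(w^{-1}\lambda+\rho-w^{-1}\rho\bigr)}(g),
\]
whereas the LHS becomes
\[
\text{LHS}=\sgn(w)\,\Delta(g)^{-1}\sum_{w_K\in W_K}\sgn(w_K)\,e^{w^{-1}w_K\lambda}(g).
\]
No reindexing of $W_K$ can match the exponents $w_K w^{-1}\lambda+w_K(\rho-w^{-1}\rho)$ with $w^{-1}w_K\lambda$: already for $G=\SL(2,\R)$, $W_K=\{1\}$, $w=s$, one gets $e^{-\lambda+\alpha}$ versus $-e^{-\lambda}$. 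So the ``suitable change of summation variable'' you invoke does not exist, and the sign/coset bookkeeping you flag as the main obstacle is in fact fatal under your convention.

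The paper avoids this by interpreting the Dolbeault operator on the RHS with respect to the complex structure on $G/T$ determined by the positive system $w^{-1}R^+$ (stated explicitly at the end of the proof). With that convention the relevant half-sum is $w^{-1}\rho$ rather than $\rho$, the shift $\rho-w^{-1}\rho$ disappears, and the substitution $y=w^{-1}xw$ directly matches the two fixed-point sums. Your alternative route via Proposition~\ref{prop GT} and Theorem~\ref{thm fixed GK} hits the same wall: Proposition~\ref{prop GT} requires the line-bundle weight plus $\rho_n$ to be a highest weight for $K$, which $w^{-1}(\lambda-\rho)+\rho_n$ need not be, so you cannot rewrite the RHS as a Dirac induction without first addressing the change of complex structure.
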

\begin{proof}
In the case of Dolbeault operators twisted by holmorphic vector bundles, and finite fixed point sets, the fixed point formula in Theorem \ref{thm fixed pt} simplifies considerably, see Corollary 6.3 in \cite{HW}. For any $h \in T$ with dense powers, and any integral $\nu \in i\kt^*$, this yields
\beq{eq fixed Dolb}
\tau_{h}(\ind_G(\bar\partial_{L^G_{\nu}}+\bar\partial_{L^G_{\nu}}^*)) = 
\sum_{xT \in (G/T)^{h}} \frac{\tr(g|_{(L^G_{\nu})_{xT}})}{\det_{\C}(1-g^{-1}|_{T_{xT}G/T})}.
\eeq
Now, for $w \in W_G$, we have $(G/T)^{wgw^{-1}} = (G/T)^T = N_K(T)/T$. And for $x \in N_K(T)$,
\[
\begin{split}
(L^G_{\nu})_{xT} &= \C_{\Ad^*(x)\nu};\\
T_{xT}G/T &= \bigoplus_{\alpha \in R^+} \C_{\Ad^*(x)\alpha}.
\end{split}
\]
as complex representations of $T$, where we use the complex structure on $G/T$ defined by $R^+$. So
\begin{align}
\tau_{wgw^{-1}}(\ind_G(\bar\partial_{L^G_{\lambda-\rho}}+\bar\partial_{L^G_{\lambda-\rho}}^*))&=
\sum_{xT \in N_K(T)/T} \frac{\tr(wgw^{-1}|_{\C_{\Ad^*(x)(\lambda-\rho)} })}{\det_{\C}(1-wg^{-1}w^{-1}|_{\bigoplus_{\alpha \in R^+} \C_{\Ad^*(x)\alpha}})} \nonumber \\
&= \sum_{xT \in N_K(T)/T} \frac{\tr(g|_{\C_{\Ad^*(w^{-1}x)(\lambda-\rho)} })}{\det_{\C}(1-g^{-1}|_{\bigoplus_{\alpha \in R^+} \C_{\Ad^*(w^{-1}x)\alpha}})} \nonumber \\
&= \sum_{yT \in w^{-1}N_K(T)w/T} \frac{\tr(g|_{\C_{\Ad^*(yw^{-1})(\lambda-\rho)} })}{\det_{\C}(1-g^{-1}|_{\bigoplus_{\alpha \in R^+} \C_{\Ad^*(yw^{-1})\alpha}})}.\label{eq tau wgw}
\end{align}
(In the last step, we substituted $y = w^{-1}xw$.)

Finally, $w^{-1}N_K(T)w = N_K(T)$, and 
\[
\bigoplus_{\alpha \in R^+} \C_{\Ad^*(yw^{-1})\alpha} = T_{yT}G/T
\]
as complex representations of $T$, with respect to the complex structure defined by the positive root system $w^{-1}R^+$ with respect to which $w^{-1}(\lambda - \rho)$ is dominant.
So by \eqref{eq fixed Dolb}, the expression \eqref{eq tau wgw} equals
\[
\tau_g(\ind_{G}(\bar\partial_{L^{G}_{w^{-1}(\lambda-\rho)}}+\bar\partial_{L^{G}_{w^{-1}(\lambda-\rho)}}^*)).
\]
\end{proof}

 \begin{lemma}
\label{lem:2}
We have for all integral $\nu \in i\kt^*$,
\[
\tau_e(\ind_{G_c}(\bar\partial_{L^{G_c}_{\nu}}+\bar\partial_{L^{G_c}_{\nu}}^*))=
\tau_e(\ind_{G}(\bar\partial_{L^{G}_{\nu}}+\bar\partial_{L^{G}_{\nu}}^*)).
\]
\end{lemma}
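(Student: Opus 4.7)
My plan is to reduce both sides to integrals of a common local index density.

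On the compact side, $\tau_e$ applied to a class in $K_0(C^*_rG_c)\cong R(G_c)$ returns the dimension of the corresponding virtual representation; by Hirzebruch--Riemann--Roch,
\[
\tau_e\bigl(\ind_{G_c}(\bar\partial_{L^{G_c}_{\nu}}+\bar\partial_{L^{G_c}_{\nu}}^*)\bigr)=\int_{G_c/T}\Todd\bigl(T^{1,0}(G_c/T)\bigr)\wedge \ch(L^{G_c}_{\nu}).
\]
On the non-compact side, Proposition~4.4 of \cite{Wang14} identifies $\tau_e(\ind_G(D))$ with the $L^2$-index of $D$, and the $L^2$-index theorem for the proper cocompact action of $G$ on $G/T$ yields
\[
\tau_e\bigl(\ind_G(\bar\partial_{L^G_{\nu}}+\bar\partial_{L^G_{\nu}}^*)\bigr)=\int_{G/T}c(x)\,\Todd\bigl(T^{1,0}(G/T)\bigr)\wedge \ch(L^G_{\nu})\,dx,
\]
for any cutoff $c\geq 0$ on $G/T$ satisfying $\int_G c(hx)\,dh=1$ for every $x\in G/T$.

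The second step is to compare the two integrands at the basepoint $eT$. Using the canonical invariant connections coming from the reductive decompositions of $\kg$ and $\kg_c$ modulo $\kt$, the Chern--Weil representatives of $\Todd$ and $\ch$ become $G$- (resp.\ $G_c$-) invariant top-degree forms on homogeneous spaces, hence uniquely determined by their value at the basepoint. As complex $T$-representations,
\[
T^{1,0}(G/T)|_{eT}\;\cong\;\bigoplus_{\alpha\in R^+}\kg_{\C,\alpha}\;\cong\;T^{1,0}(G_c/T)|_{eT},
\]
and $L^G_{\nu}|_{eT}\cong\C_{\nu}\cong L^{G_c}_{\nu}|_{eT}$. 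Moreover, the curvatures of the canonical invariant connections are built polynomially from Lie brackets in $\kg_{\C}$, which are common to $\kg$ and $\kg_c$ since $\kg\otimes\C=\kg_c\otimes\C=\kg_{\C}$. Consequently the local index densities at $eT$ coincide.

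Finally, I reconcile the volume factors. The cutoff condition forces $\int_{G/T}c\,dx=\vol(T)^{-1}$, while under the standard Haar normalization $\vol(G_c)=1$ (the one under which $\tau_e$ on the compact side reproduces ordinary dimensions of irreducible representations) one has $\vol(G_c/T)=\vol(T)^{-1}$ as well. Both integrals therefore equal $\vol(T)^{-1}$ times the common value of the local density at $eT$, and the equality follows. The main obstacle is the careful bookkeeping of Haar-measure normalizations across $G$, $G_c$, and $T$, together with a check that the orientations induced by the common choice $R^+$ of positive roots agree on the two homogeneous spaces.
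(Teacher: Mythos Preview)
Your approach is essentially the same as the paper's: both sides are computed by an $L^2$-index formula and then compared through the observation that the local data at $eT$---the complex $T$-module $T^{1,0}_{eT}\cong\bigoplus_{\alpha\in R^+}\kg_{\C,\alpha}$ and the fibre $\C_\nu$---are identical for $G/T$ and $G_c/T$. The difference is one of packaging. The paper invokes Connes--Moscovici's Theorem~5.2 in \cite{Connes82} directly in its relative Lie algebra cohomology form, where the characteristic classes $\ch$ and $\hat A$ live in $H^*(\kg,T,\R)\cong H^*(\kg_c,T,\R)$ and are visibly determined by the $T$-representation on $\kg/\kt\cong\kg_c/\kt$ alone; this formulation already has the normalizations built in, so the proof is two lines. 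Your route through explicit integrals over $G_c/T$ and a cutoff integral over $G/T$, followed by comparison of invariant Chern--Weil representatives at the basepoint, is the same argument unwound one level. It is correct in outline, but the Haar-measure and orientation compatibilities you yourself flag as ``the main obstacle'' are genuinely needed and are not carried out here, so as written the argument is a sketch rather than a proof. The paper's relative-cohomology formulation is precisely what absorbs that bookkeeping.
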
 
\begin{proof}
By Connes--Moscovici's $L^2$-index formula, Theorem 5.2  in~\cite{Connes82}, we have
\[
\begin{split}
\tau_e(\ind_{G}(\bar\partial_{L^{G}_{\nu}}+\bar\partial_{L^{G}_{\nu}}^*)) &= \varepsilon(\ch(\Bigwedge_{\C}\kg/\kt \otimes \C_{\nu}) \hat A(\kg, T))[\kg/\kt];\\
\tau_e(\ind_{G_c}(\bar\partial_{L^{G_c}_{\nu}}+\bar\partial_{L^{G_c}_{\nu}}^*)) &=
\varepsilon(\ch(\Bigwedge_{\C}\kg_c/\kt \otimes \C_{\nu}) \hat A(\kg_c, T))[\kg_c/\kt],\\
\end{split}
\]
for the same sign $\varepsilon \in \{\pm1\}$.
Here  $\ch\colon R(T)\rightarrow H^*(\mathfrak g, T, \R)$ is the relative Chern character, and the characteristic classes $\hat A$ in $H^*(\mathfrak g, T, \R)$ are defined in Section 4 of \cite{Connes82}. The right hand side of the first line only depends on the representations $\Bigwedge_{\C}\kg/\kt \otimes \C_{\nu}$ and $\kg/\kt$ of $T$, and similarly for the right hand side of the second line. Since $\kg/\kt$ and $\kg_c/\kt$
 both equal the sum of the positive root spaces as complex representations of $T$, we find that the two expressions are equal.
\end{proof}

 \subsection{Proofs of Theorem \ref{thm cts} and Corollary \ref{cor char cts}}
 
To finish the proof of Theorem~\ref{thm cts}, we need a final lemma. 
\begin{lemma}[Arthur]\label{lem Arthur}
We have for all $g \in T^{\reg}$,
\[
\tau_g^s = \sum_{[w] \in W_G/W_K} \tau_{wgw^{-1}}.
\]
\end{lemma}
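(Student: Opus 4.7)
The plan is to parameterize the $G$-conjugacy classes contained in the stable class $(g)_s$ by $W_G/W_K$, with the coset $[w]$ corresponding to $wgw^{-1}$. Granted this bijection, the formula is immediate from the definition $\tau_g^s = \sum_{g'} \tau_{g'}$ over representatives $g'$ of $G$-classes inside $(g)_s$.

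First I would check that every $G$-conjugacy class in $(g)_s$ has a representative in $T$. Since $g \in T^{\reg}$ is regular elliptic, and being regular elliptic is a $G_{\C}$-invariant spectral condition, every element of $(g)_s$ is a regular elliptic element of $G$ and therefore lies in some compact Cartan subgroup. In the equal-rank setting all compact Cartan subgroups of $G$ are $G$-conjugate (they are all $K$-conjugate to $T$), so up to $G$-conjugacy one may assume the representative lies in $T$.

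Next I would identify $(g)_s \cap T^{\reg}$ with the $W_G$-orbit of $g$. If $g' = hgh^{-1} \in T$ with $h \in G_{\C}$, then on the one hand $Z_{G_{\C}}(g') = h T_{\C} h^{-1}$, and on the other hand $Z_{G_{\C}}(g') = T_{\C}$ by regularity of $g' \in T$, so $h \in N_{G_{\C}}(T_{\C})$ and represents an element of $W_G$. Finally I would decide when $w_1 g w_1^{-1}$ and $w_2 g w_2^{-1}$ are $G$-conjugate: any conjugating element $h \in G$ satisfies $w_2^{-1} h w_1 \in Z_G(g) = T$ by regularity, hence $h \in N_G(T)$; conversely every $h \in N_G(T)$ yields such a conjugation via its image in $W_G$. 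Since $N_G(T)/T = W_K$ in the equal-rank case (as $T$ is a compact Cartan and $G$ is connected), the resulting equivalence is exactly the right $W_K$-action, so the $G$-conjugacy classes in $(g)_s$ are in bijection with $W_K \backslash W_G \cong W_G/W_K$. Choosing one representative $w$ per coset and summing the corresponding orbital integrals yields the claim.

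The main conceptual point, which I would want to state carefully, is the identification $N_G(T)/T = W_K$ in the equal-rank setting together with the representability of every class in $T$; once these two structural facts are in place the rest is formal bookkeeping using regularity of $g$, and this is essentially Arthur's original derivation.
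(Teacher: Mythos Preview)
Your proposal is correct and follows essentially the same route as the paper, which simply cites Arthur for the two facts that regular elements of $T$ are $G$-conjugate iff $W_K$-conjugate and stably conjugate iff $W_G$-conjugate; you have spelled out the standard argument behind this citation. One small point: in the step ``$w_2^{-1} h w_1 \in Z_G(g) = T$'' you are tacitly treating $w_i \in W_G$ as elements of $G$, which they need not be; the clean version is to observe that $h$ conjugates $Z_G(w_1 g w_1^{-1}) = T$ to $Z_G(w_2 g w_2^{-1}) = T$, hence $h \in N_G(T)$, and then invoke $N_G(T)/T = W_K$ as you do.
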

\begin{proof}
In Section 27 (p.194) of \cite{Arthur}, it is pointed out that two elements $g,g' \in T^{\reg}$ are conjugate if and only if $g=w_Kg'w_K^{-1}$ for some $w_K\in W_K$, and stably conjugate if and only if $g=w_Gg'w_G^{-1}$ for some $w_G\in W_G$.
\end{proof}


\begin{proof}[Proof of Theorem~\ref{thm cts}]
By surjectivity of Dirac induction and Proposition \ref{prop GT}, 
every $x\in K_0(C^*_rG)$ is represented by the equivariant index 
\[
x=\ind_G(\bar\partial_{L^G_{\nu}}+\bar\partial_{L^G_{\nu}}^*)
\]
for an integral element $\nu \in i\kt^*$.

Let $g \in S$. 
By Theorem \ref{thm char id} and Lemmas  \ref{lem Lw} and \ref{lem Arthur}, we have
\[
\tau_g^s(x)=\tau_g^s(\ind_G(\bar\partial_{L^G_{\nu}}+\bar\partial_{L^G_{\nu}}^*))
=\tau_g(\ind_{G_c}(\bar\partial_{L^{G_c}_{\nu}}+\bar\partial_{L^{G_c}_{\nu}}^*)).
\]
Since $G_c$ is compact, this expression is continuous in $g$. And
by Lemma~\ref{lem:2},
\[
\tau_e(\ind_{G_c}(\bar\partial_{L^{G_c}_{\nu}}+\bar\partial_{L^{G_c}_{\nu}}^*))
=\tau_e(\ind_{G}(\bar\partial_{L^{G}_{\nu}}+\bar\partial_{L^{G}_{\nu}}^*))=\tau_e(x).
\]
\end{proof}
 
\begin{proof}[Proof of Corollary \ref{cor char cts}]
For $w \in W_G$, let $[\pi_{w\lambda}] \in K_0(C^*_rG)$ be the class defined by the discrete series representation with Harish-Chandra parameter $w\lambda$.
By Propositions 5.1 and 5.2 in \cite{HW2}, we have for 
 all $g \in T^{\reg}$,
\[
\begin{split}
\sum_{[w]\in W_G/W_K}\Theta_{w\lambda}(g) &= 
\sum_{[w]\in W_G/W_K}\tau_{g}([\pi_{w\lambda}]) \\
&= (-1)^{\dim(G/K)/2}
\sum_{[w]\in W_G/W_K} \tau_g(\ind_{G}(\bar\partial_{L^{G}_{w(\lambda-\rho)}}+\bar\partial_{L^{G}_{w(\lambda-\rho)}}^*)).
\end{split}
\]
Lemmas \ref{lem Lw} and \ref{lem Arthur} imply that the right hand side equals
\[
(-1)^{\dim(G/K)/2}
\tau_g^s(\ind_{G}(\bar\partial_{L^{G}_{\lambda-\rho}}+\bar\partial_{L^{G}_{\lambda-\rho}}^*)).
\]
As $g \to e$ through the set  $S$
in Theorem \ref{thm cts}, that result implies that the limit of the above expression is
\[
(-1)^{\dim(G/K)/2}
\tau_e (\ind_{G}(\bar\partial_{L^{G}_{\lambda-\rho}}+\bar\partial_{L^{G}_{\lambda-\rho}}^*)) = \tau_e([\pi_{\lambda}]) = d_{\pi}.
\]
The claim now follows from continuity of characters on the regular set.
\end{proof}

\bibliographystyle{plain}

\bibliography{mybib}

\end{document}